\newcommand{\BibTeX}{{\scshape Bib}\kern-.08em\TeX}
\newcommand{\T}{\S\kern .15em\relax }
\newcommand{\AMS}{$\mathcal{\rm a}$\kern-.1667em\lower.5ex\hbox
        {$\mathcal{M}$}\kern-.125em$\mathcal{S}$}
\theoremstyle{plain}
\newtheorem*{mapropo}{\textsc{Proposition}}
\newtheorem*{mapropo1}{\textsc{Proposition 1}}
\newtheorem*{mapropo2}{\textsc{Proposition 2}}
\newtheorem*{monlem}{\textsc{Lemme}}
\newtheorem*{marema}{\textsc{Remarque}}
\def\ES#1{\EuScript{#1}}
\def\bs#1{\boldsymbol{#1}}
\def\cad{c'est-\`a-dire\ }
\title[DONN\'EES ENDOSCOPIQUES: APPLICATIONS D'UNE CONSTRUCTION DE LANGLANDS]
{DONN\'EES ENDOSCOPIQUES \\ D'UN GROUPE R\'EDUCTIF CONNEXE: \\ APPLICATIONS D'UNE CONSTRUCTION DE LANGLANDS}
\author{Bertrand Lemaire}
\address{Institut de Math\'ematiques de Luminy et
UMR 6206 du CNRS, 
Universit\'e Aix--Marseille II, Case Postale 907\\
F--13288 Marseille Cedex 9}
\email{Bertrand.Lemaire@univ-amu.fr}
\author{Jean-Loup Waldspurger}
\address{CNRS, Institut de Mathématiques de Jussieu-Paris-Rive-Gauche, 
2 place Jussieu 75005 Paris}
\email{jean-loup.waldspurger@imj-prg.fr}
\begin{document}

\subjclass{22E50, 22E55}

\keywords{endoscopie ordinaire, donnŽe endoscopique elliptique, localisation}

\altkeywords{non-twisted ebdoscopy, elliptic endoscopic datum, localization}
\maketitle
\setcounter{tocdepth}{3}

\begin{abstract}
Soient $F$ un corps global, et $G$ un groupe rŽductif connexe dŽfini sur $F$. On prouve que si deux donnŽes endoscopiques de $G$ sont Žquivalentes 
en presque toute place de $F$, alors elles sont Žquivalentes. Le rŽsultat est encore vrai pour l'endoscopie (ordinaire) avec caractre. On donne aussi, pour $F$ global ou local 
et $G$ quasi-simple simplement connexe, une description des donnŽes endoscopiques elliptiques de $G$.  
\end{abstract}

\begin{altabstract}Let $F$ be a global field, and $G$ a connected reductive group defined over $F$. We prove that two endoscopic data of $G$ which are equivalent almost everywhere, 
are equivalent. The result remains true for (non-twisted) endoscopy with character. We also give, for $F$ global or local and $G$ quasi-simple simply connected, 
a description of the elliptic endoscopic data of $G$. 
\end{altabstract}

\section*{Introduction}Soit $F$ un corps global, et soit $G$ un groupe rŽductif connexe dŽfini sur $F$. On fixe une 
cl™ture sŽparable algŽbrique $\overline{F}$ de $F$. On note $\Gamma_F$ le groupe de Galois de $\overline{F}/F$, et $W_F$ son groupe de Weil. 
Pour $v$ une place de $F$, on note $F_v$ le complŽtŽ de $F$ en $v$, et on pose $G_v= G\times_F F_v$. Fixons un ŽlŽment 
$\bs{a}\in {\rm H}^1(W_F,Z(\hat{G}))/\ker^1(W_F,Z(\hat{G}))$ --- cf. \ref{le rŽsultat}. 
Soit $\bs{G}'=(G'\!,\ES{G}'\!,s)$ une donnŽe endoscopique de $G$. Pour chaque place $v$ de $F$, cette donnŽe dŽfinit par localisation une donnŽe endoscopique 
 $\bs{G}_v=(G'_v,\ES{G}'_v,s)$ de $(G_v,\bs{a}_v)$. Le premier rŽsultat prouvŽ ici est:
 
 \begin{mapropo1}
Si $\bs{G}'_1$ et $\bs{G}'_2$ sont deux donnŽes endoscopiques 
 de $(G,\bs{a})$ telles que les donnŽes locales $\bs{G}'_{1,v}$ et $\bs{G}'_{2,v}$ soient Žquivalentes pour presque tout $v$, alors les donnŽes $\bs{G}'_1$ et 
 $\bs{G}'_2$ sont Žquivalentes.
 \end{mapropo1}
 L'ŽlŽment $\bs{a}$ ne joue en fait aucun r™le puisqu'on se ramne facilement au cas o $G$ est $F$-quasi-simple et simplement connexe et 
 (forcŽment) $\bs{a}=1$. Par restriction des scalaires ˆ la Weil, on se ramne ensuite au cas o $G$ est (absolument) quasi-simple et simplement connexe. 
 On peut donc supposer, et l'on suppose, que le groupe dual $\hat{G}$ est adjoint et simple. Comme les constructions que nous allons faire sont valables aussi bien 
 dans le cas global que dans le cas local, la lettre $F$ dŽsigne maintenant un corps commutatif qui est soit un corps global soit un corps local. Enfin pour allŽger l'Žcriture, 
 on considre plut™t la forme \og galoisienne\fg des $L$-groupes. L'adaptation des constructions ci-dessous 
 ˆ leur forme \og groupes de Weil\fg ne pose aucun problme. 
 
 Le groupe dual $\hat{G}=\hat{G}_{\rm AD}$ est muni d'une action galoisienne $\sigma \mapsto \sigma_G$. On fixe une paire de Borel 
 $(\hat{B},\hat{T})$ de $\hat{G}$ conservŽe par cette action galoisienne. On note $\Delta$ l'ensemble de racines simples dŽterminŽ par $\hat{B}$, et $\alpha_0$ 
 l'opposŽe de la plus grande racine positive. On pose $\Delta_{\rm a}= \{\alpha_0\}\cup \Delta$. On note $\ES{D}$, resp. $\ES{D}_{\rm a}$, le diagramme de Dynkin dont l'ensemble 
 des sommets est $\Delta$, resp. $\Delta_{\rm a}$. L'ensemble $\Omega$ 
 des ŽlŽments de $W=N_{\hat{G}}(\hat{T})/ \hat{T}$ qui conservent $\Delta_{\rm a}$ s'identifie ˆ un sous-groupe abŽlien distinguŽ du groupe d'automorphismes ${\rm Aut}(\ES{D}_{\rm a})$ de 
 $\ES{D}_{\rm a}$, et on a la dŽcomposition
 $$
 {\rm Aut}(\ES{D}_{\rm a})= \Omega \rtimes {\rm Aut}(\ES{D}).
 $$
 Puisque l'action galoisienne conserve la paire $(\hat{B},\hat{T})$, elle induit une action sur $\ES{D}$. 
 On note $E$ l'extension galoisienne finie de $F$ telle que $\Gamma_E = \ker (\Gamma_F \rightarrow {\rm Aut}(\ES{D}))$. 
 
 Soit $\bs{G}'= (G'\!,\ES{G}'\!,s)$ une donnŽe endoscopique de $G$ (on a donc $\ES{G}'\subset {^LG}= \hat{G} \rtimes \Gamma_F$). On suppose que 
 $s$ est d'ordre fini $d$. 
 \`A Žquivalence prs, on peut supposer $s\in \hat{T}$. On fixe un sous-groupe 
 de Borel $\hat{B}'$ de $\hat{G}'= Z_{\hat{G}}(s)^\circ$. Pour tout $\sigma\in \Gamma_F$, on peut choisir un ŽlŽment 
 $(g(\sigma),\sigma)\in \ES{G}'$ dont l'action par conjugaison conserve la paire de Borel $(\hat{B}'\!,\hat{T})$ de $\hat{G}'$. 
 Il dŽtermine un ŽlŽment $w_{G'}(\sigma)\in W$, ainsi qu'une action galoisienne sur $\hat{T}$
$$
\sigma \mapsto \sigma_{G'}= w_{G'}(\sigma)\sigma_G.
$$
Son image est contenue dans le groupe d'automorphismes de $\hat{T}$ qui fixent $s$ et conservent $\hat{B}'$. Le point-clŽ de la construction est une cons\'equence des 
rŽsultats de Langlands \cite{L}. \`A $s$ et $\hat{B}'$, Langlands associe un ensemble de racines $\mathfrak{X}= \bigcup_{k=0,\ldots ,d-1}\mathfrak{X}_k$. Il d\'emontre qu'\`a conjugaison prs par un 
ŽlŽment de $W$,  cet ensemble est soit $\Delta$ soit $\Delta_{\rm a}$. Quitte ˆ conjuguer $\hat{G}'$ par un ŽlŽment $u\in N_{\hat{G}}(\hat{T})$ et ˆ remplacer $\hat{B}'$ par $u(\hat{B}')$, 
on peut supposer que $\mathfrak{X}= \Delta$ ou $\mathfrak{X}= \Delta_{\rm a}$. Alors l'application $\sigma \mapsto w_{G'}(\sigma)$ 
est ˆ valeurs dans $\Omega$, et $w_{G'}(\sigma)$ conserve $\mathfrak{X}$. Notons $K$ l'extension galoisienne finie de $F$ telle que 
$\Gamma_K$ soit le noyau de l'homomorphisme $\sigma \mapsto \sigma_{G'}$. On distingue deux cas:
\begin{itemize}
\item {\bf Cas 1}: si $\mathfrak{X}= \Delta$, alors 
$w_{G'}(\sigma)= 1$ et $\ES{G}'= \hat{G}'\rtimes \Gamma_F$.
\item {\bf Cas 2}: si $\mathfrak{X}= \Delta_{\rm a}$, alors $E\subset K$ et la restriction de l'application $\sigma\mapsto w_{G'}(\sigma)$ \`a $\Gamma_{E}$ 
se quotiente en  une injection de $\Gamma_{K/E}= {\rm Gal}(K/E)$ dans $\Omega$.
\end{itemize}
ConsidŽrons deux donnŽes endoscopiques $\bs{G}'_1=(G'_1,\ES{G}'_1,s)$ et $\bs{G}'_2= (G'_2,\ES{G}'_2,s)$ de $G$, avec le mme $s$ d'ordre fini $d$. On note encore 
$\hat{G}'$ le groupe $Z_{\hat{G}}(s)^\circ$. On applique les constructions ci-dessus ˆ ces deux donnŽes, en choisissant le mme sous-groupe de Borel $\hat{B}'$ de $\hat{G}'$ 
pour les deux constructions. On obtient les mmes ensembles $\mathfrak{X}$ et $\mathfrak{X}_k$ pour les deux donnŽes, seules diffrent (Žventuellement) 
les actions galoisiennes $\sigma_{G'_1}$ et $\sigma_{G'_2}$. On obtient:
\begin{itemize}
\item {\bf Cas 1}: si $\mathfrak{X}=\Delta$, les deux donn\'ees sont \'equivalentes.
\item {\bf Cas 2}: si $\mathfrak{X}=\Delta_{\rm a}$, les donn\'ees $\bs{G}'_{1}$ et $\bs{G}'_{2}$ sont \'equivalentes si et seulement s'il existe $\omega\in \Omega$ tel que 
$\omega(\mathfrak{X}_{k})=\mathfrak{X}_{k}$ pour $k= 1,\ldots,d-1$ et $\omega \sigma_{G'_{1}}\omega^{-1}=\sigma_{G'_{2}}$ pour tout $\sigma\in \Gamma_{F}$.
\end{itemize}
Gr‰ce ˆ ce rŽsultat, on dŽmontre la proposition 1 dans le cas o les ŽlŽments $s_1$ et $s_2$ sont d'ordre fini. 
Notons qu'il suffit de traiter le cas $s_1=s_2=s$. On supprime ensuite la restriction sur $s$ gr‰ce aux constructions de Langlands \cite{L} pp. 704-705: 
elles permettent de construire des donnŽes endoscopiques $\bs{G}''_1= (G''_1,\ES{G}''_1,t)$ et $\bs{G}''_2= (G''_2,\ES{G}''_2,t)$ de $G$ 
avec $t$ d'ordre fini, telles que les donnŽes $\bs{G}'_1$ et $\bs{G}'_2$ sont Žquivalentes si et seulement si les donnŽes $\bs{G}''_1$ et $\bs{G}''_2$ 
le sont (idem pour les donnŽes locales $\bs{G}'_{i,v}$ et $\bs{G}''_{i,v}$).  

\vskip1mm
Les constructions ci-dessus permettent aussi de dŽcrire les donnŽes endoscopiques elliptiques de $G$. Les hypothses sont les mmes qu'avant: $F$ est global ou local; 
$\hat{G}$ est adjoint et simple; et on considre la forme \og galoisienne \fg des $L$-groupes. Notons $\underline{\ES{E}}(G)$ l'ensemble des couples $( \omega_{G'}, \ES{O})$ o\`u 
$\omega_{G'}:\Gamma_{F}\to \Omega$ est une application telle que l'application
$$
\sigma\mapsto \sigma_{G'}\buildrel {\rm d\acute{e}f}\over{=}\omega_{G'}(\sigma)\sigma_{G}
$$ soit un homomorphisme de $\Gamma_{F}$ dans ${\rm Aut}(\ES{D}_{\rm a})$, et 
$\ES{O}$ est un sous-ensemble non vide de $\Delta_{\rm a}$ qui est conserv\'e par l'action $\sigma\mapsto \sigma_{G'}$ et qui forme une unique orbite pour cette action.
Deux tels couples $(\omega_{G'_{1}},\ES{O}_{1})$ et $(\omega_{G'_{2}},\ES{O}_{2})$ sont dits \'equivalents s'il existe $\omega\in \Omega$ tel que $\ES{O}_{2}=\omega(\ES{O}_{1})$ et $\sigma_{G'_{2}}=\omega\sigma_{G'_{1}}\omega^{-1}$ pour tout $\sigma\in \Gamma_{F}$. Notons $\underline{E}(G)$ l'ensemble des classes d'\'equivalence dans $\underline{\ES{E}}(G)$. 

ConsidŽrons un couple $(\omega_{G'},\ES{O})\in \underline{\ES{E}}(G)$. Posons
$$
d=\sum_{\alpha\in \ES{O}}d(\alpha).
$$
Fixons $\zeta_d\in {\Bbb C}^\times$ une racine primitive de $1$ d'ordre $d$. 
Soit $s$ l'unique \'el\'ement de $\hat{T}$ tel que $\alpha(s)=1$ pour tout $\alpha\in \Delta\smallsetminus (\ES{O}\cap \Delta)$ et $\alpha(s)=\zeta_{d}$ pour tout 
$\alpha\in \ES{O}\cap \Delta$. Il est fixe par l'action galoisienne $\sigma_{G'}$. 
Posons $\hat{G'}=Z_{\hat{G}}(s)^\circ$. Notons $\ES{G}'$ le sous-ensemble de $^LG$ form\'e des \'el\'ements $(g\tilde{\omega}_{G'}(\sigma),\sigma)$ pour $g\in \hat{G'}$ et $\sigma\in \Gamma_{F}$, o\`u $\tilde{\omega}_{G'}(\sigma)$ est un repr\'esentant quelconque de $\omega_{G'}(\sigma)$ dans $N_{\hat{G}}(\hat{T})$. L'ensemble $\ES{G}'$ est un  groupe qui normalise $\hat{G}'$. On en d\'eduit de la fa\c{c}on habituelle une $L$-action de $\Gamma_{F}$ sur $\hat{G}'$. Soit $G'$ un groupe rŽductif connexe dŽfini et quasi-dŽployŽ sur $F$ 
dont $\hat{G}'$, muni de cette action galoisienne, est le groupe dual. Le triplet $\bs{G}'=(G'\!,\ES{G}'\!,s)$ est une donn\'ee endoscopique de $G$. 
L'hypothse que $\ES{O}$ est une seule orbite sous l'action de $\Gamma_F$ implique que cette donnŽe est elliptique. Le second rŽsultat prouvŽ ici est:
\begin{mapropo2}
L'application qui, \`a un couple $(\omega_{G'},\ES{O})\in \underline{\ES{E}}(G)$, associe la donn\'ee $\bs{G}'\!$, se quotiente en une bijection de $\underline{E}(G)$ sur l'ensemble des classes d'\'equivalence de donn\'ees endoscopiques elliptiques de $G$.
\end{mapropo2}

L'article comporte deux sections, la premire est dŽdiŽe ˆ la preuve de la proposition 1, la seconde ˆ celle de la proposition 2. 
\`A partir de \ref{hypothses et dŽfinitions}, le groupe $G$ est (absolument) quasi-simple et simplement connexe, et on utilise la forme \og galoisienne\fg des $L$-groupes.

\section{\'Equivalence presque partout de donnŽes endoscopiques}\label{Žquivalence pp}

\subsection{Le rŽsultat}\label{le rŽsultat}
On consid\`ere un corps commutatif $F$ qui est soit un corps global (corps de nombres ou corps de fonctions) soit un corps local (${\Bbb R}$, ${\Bbb C}$, une extension finie de ${\Bbb Q}_p$ ou 
de ${\Bbb F}_p((t))$). On fixe une cl\^oture sŽparable 
algŽbrique $\overline{F}$ de $F$. On note $\Gamma_{F}$ le groupe de Galois de $\overline{F}/F$, et $W_F$ son groupe de Weil. Via l'homomorphisme naturel 
$W_F\rightarrow \Gamma_F$, le groupe $W_F$ agit sur tout ensemble sur lequel agit $\Gamma_F$. Toutes les extensions sŽparables finies de $F$ seront suppos\'ees incluses dans $\overline{F}$. Pour une extension sŽparable finie $K$ de $F$, on note $\Gamma_K$ le groupe de Galois de $\overline{F}/K$, et $W_K$ son groupe de Weil. Si de plus $K/F$ est galoisienne, on note $\Gamma_{K/F}\;(= \Gamma_F/\Gamma_K)$ le groupe de Galois de $K/F$.

Soit $G$ un groupe r\'eductif connexe d\'efini sur $F$. On note $\hat{G}$ le groupe dual de $G$. Il est muni d'une action galoisienne notŽe $\sigma\mapsto \sigma_{G}$. On note 
${^LG}= \hat{G} \rtimes W_F$ le $L$-groupe de $G$, et $Z(\hat{G})$ le centre de $\hat{G}$. Dans le cas local on fixe un ŽlŽment 
$\bs{a}\in {\rm H}^1(W_F,Z(\hat{G}))$, et dans le cas global on fixe $\bs{a}\in {\rm H}^1(W_F,Z(\hat{G}))/\ker^1(W_F,Z(\hat{G}))$, o 
$\ker^1(W_F,Z(\hat{G}))$ est le noyau de l'homomorphisme de localisation
$$
{\rm H}^1(W_F,Z(\hat{G}))\rightarrow \prod_v {\rm H}^1(W_{F_v},Z(\hat{G})).
$$
D'aprs un thŽorme de Langlands, ˆ cet ŽlŽment $\bs{a}$ correspond un caractre (i.e. un homomorphisme continu dans ${\Bbb C}^\times$) $\bs{\omega}$ de $G(F)$ --- cf. \cite{LL,W}. On peut supposer, mais ce n'est pas nŽcessaire, que $\bs{\omega}$ est trivial sur $Z(G;F)$; sinon la thŽorie est vide. Ici $Z(G;F)$ est le groupe des points $F$--rationnels du centre de $G$. 
 Nous utilisons la notion de {\it donnŽe endoscopique de $(G,\bs{a})$} telle qu'elle est dŽfinie dans \cite[I.1.5, VI.3.1]{MW}\footnote{Le peu que nous utilisons de loc.~cit. est valable en toute caractŽristique.}. Il s'agit donc d'endoscopie ordinaire (i.e. non tordue) avec caractre. Une telle donnŽe est notŽe $\bs{G}'=(G'\!,\ES{G}'\!,s)$. On dŽfinit la notion d'Žquivalence, ou d'isomorphisme, entre deux telles donnŽes, et en particulier, le groupe ${\rm Aut}(\bs{G}')$ des automorphismes de $\bs{G}'$. Ce groupe contient $\hat{G}'=\ES{G}'\cap \hat{G}$. Rappelons que $\hat{G}'$ est la composante neutre $Z_{\hat{G}}(s)^\circ$ du centralisateur de $s$ dans $\hat{G}$ (munie d'une action galoisienne $\sigma \mapsto \sigma_{G'}$ qui en 
fait un groupe dual de $G'$).

On suppose maintenant que $F$ est un corps global. Pour chaque place $v$ de $F$, on fixe un prolongement $\overline{v}$ de $v$ \`a $\overline{F}$. 
Le fixateur $\Gamma_v$ de $\overline{v}$ dans $\Gamma_{F}$ s'identifie \`a $\Gamma_{F_{v}}$ o\`u $F_{v}$ est le compl\'et\'e de $F$ en $v$. On a aussi une 
identification $W_{F_v}\subset W_F$. Soit $\bs{G}'=(G'\!,\ES{G}'\!,s)$ une donn\'ee endoscopique de $(G,\bs{a})$. Pour chaque place $v$ de $F$, 
on en d\'eduit par localisation une donn\'ee endoscopique $\bs{G}'_v=(G'_{v},\ES{G}'_{v},s)$ de $(G_{v},\bs{a}_v)$, 
o $G_v=G\times_F F_v$ et $\bs{a}_v \in {\rm H}^1(W_{F_v},Z(\hat{G}))$. 
En particulier, le caractre $\bs{\omega}_v$ de $G(F_v)$ correspondant ˆ $\bs{a}_v$ est le prolongement (par continuitŽ) de $\bs{\omega}$, et $\ES{G}'_{v}$ est le sous-groupe des $(g,w)\in \ES{G}'$ tels que $w\in W_{F_v}$.

La proposition suivante est le rŽsultat principal de cette section \ref{Žquivalence pp}.

\begin{mapropo}
Consid\'erons deux donn\'ees endoscopiques $\bs{G}'_{1}=(G'_{1},\ES{G}'_{1},s_1)$ et $\bs{G}'_{2}=(G'_{2},\ES{G}'_{2},s_2)$ de $(G,\bs{a})$. On suppose que 
pour presque toute place $v$ de $F$, les donnŽes $\bs{G}'_{1,v}$ et $\bs{G}'_{2,v}$ sont \'equivalentes.
Alors les donnŽes $\bs{G}'_{1}$ et $\bs{G}'_{2}$ sont \'equivalentes. 
\end{mapropo}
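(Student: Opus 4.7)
\emph{Reductions.} Following the strategy sketched in the introduction, I would use Weil restriction of scalars to reduce to the case where $G$ is absolutely quasi-simple simply connected (with $\bs{a}=1$), and then assume $\hat G$ adjoint and simple. Via the constructions of Langlands \cite[pp.~704--705]{L} applied uniformly to both data, I further reduce to $s_1=s_2=s\in \hat T$ of finite order $d$, with a common Borel $\hat B'\subset Z_{\hat G}(s)^\circ$ serving for both constructions. This yields a single partition $\mathfrak{X}=\bigcup_k\mathfrak{X}_k$ (of either $\Delta$ or $\Delta_{\rm a}$) and a common finite subgroup $\Omega_0\subset \Omega$ of elements preserving each stratum $\mathfrak{X}_k$.

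\emph{Case split and cocycle reformulation.} In Case~1 ($\mathfrak{X}=\Delta$) the two data are automatically equivalent and the local hypothesis is not even needed. In Case~2 ($\mathfrak{X}=\Delta_{\rm a}$), the criterion recalled in the introduction reduces the problem to the existence of a single $\omega\in \Omega_0$ with $\omega\,\sigma_{G'_1}\,\omega^{-1}=\sigma_{G'_2}$ for every $\sigma\in \Gamma_F$. Writing $\sigma_{G'_i}=w_{G'_i}(\sigma)\sigma_G$, each $w_{G'_i}$ is a $1$-cocycle $\Gamma_F\to \Omega$ for the action $\sigma\mapsto \sigma_G$ on the abelian group $\Omega$, and both factor through a common finite Galois quotient $\Gamma_{M/F}$ of $\Gamma_F$. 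The required equality then becomes
\[
w_{G'_2}(\sigma)=\omega\,w_{G'_1}(\sigma)\,\sigma_G(\omega^{-1}),
\]
i.e.\ $w_{G'_1}$ and $w_{G'_2}$ are to differ by an $\Omega_0$-valued coboundary.

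\emph{Gluing via Chebotarev.} For each $\omega\in \Omega_0$, the set $H_\omega=\{\sigma\in \Gamma_F : \omega\,\sigma_{G'_1}\,\omega^{-1}=\sigma_{G'_2}\}$ is, by the cocycle identity, a closed subgroup of $\Gamma_F$ descending to a subgroup $\bar H_\omega\subset \Gamma_{M/F}$. For almost every place $v$, local equivalence furnishes $\omega_v\in \Omega_0$ with $\Gamma_{F_v}\subset H_{\omega_v}$; in particular $\mathrm{Frob}_v\in \bar H_{\omega_v}$ whenever $v$ is unramified in $M$. Combining the finiteness of $\Omega_0$ (pigeonhole) with Chebotarev's density theorem applied to $M/F$, I aim to produce a single $\omega\in\Omega_0$ whose $\bar H_\omega$ meets every conjugacy class of $\Gamma_{M/F}$, so that $\bar H_\omega=\Gamma_{M/F}$ and $\omega$ provides the required global coboundary.

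\emph{Main obstacle.} The delicate point is the final gluing. Because a finite group can be covered by finitely many of its proper subgroups, the bare statement ``every Frobenius lies in some $\bar H_\omega$'' does not immediately imply that some $\bar H_\omega$ equals $\Gamma_{M/F}$. The argument must exploit the coset structure of the (non-empty) solution sets $A(\sigma)=\{\omega\in\Omega_0 : \omega\,\sigma_{G'_1}\,\omega^{-1}=\sigma_{G'_2}\}$, which are cosets of a fixed subgroup of $\Omega_0$, together with the small and explicit abelian structure of $\Omega$ for the simple adjoint types, in order to upgrade the local pointwise coverings into a single globally valid $\omega\in \Omega_0$.
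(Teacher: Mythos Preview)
Your reductions and the Case~1/Case~2 split match the paper, and you have correctly isolated the genuine difficulty: knowing that each $\sigma$ lies in some $\bar H_\omega$ does not by itself force a single $\bar H_\omega$ to be all of $\Gamma_{M/F}$. But your proposed resolution (``exploit the coset structure of $A(\sigma)$ together with the explicit abelian structure of $\Omega$'') remains a wish rather than an argument, and as stated contains an inaccuracy: the sets $A(\sigma)$ are cosets of $\Omega_0^{\sigma_G}=\{\omega\in\Omega_0:\sigma_G(\omega)=\omega\}$, which is \emph{not} a fixed subgroup but depends on the image of $\sigma$ in $\Gamma_{E/F}$.

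The idea you are missing is precisely the one the paper uses, and it comes out of your own coset description once you look at it. For $\sigma\in\Gamma_E$ one has $\sigma_G=1$, so $\sigma_{G'_i}=w_{G'_i}(\sigma)\in\Omega$; the relation $\omega\,\sigma_{G'_1}\,\omega^{-1}=\sigma_{G'_2}$ then takes place inside the \emph{abelian} group $\Omega$ and forces $w_{G'_1}(\sigma)=w_{G'_2}(\sigma)$. Equivalently, $A(\sigma)=\Omega_0$ for every $\sigma\in\Gamma_E$. Thus every $H_\omega$ already contains $\Gamma_E$, and the gluing problem lives entirely on the quotient $\Gamma_{E/F}$, which injects into ${\rm Aut}(\ES{D})$. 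This group is cyclic except for type $D_4$ with $\Gamma_{E/F}\simeq\mathfrak{S}_3$. In the cyclic case one picks $\omega$ working for a single generator $\gamma$ (this exists by your pointwise statement~(3)); since the two homomorphisms $\sigma\mapsto\sigma_{G'_2}$ and $\sigma\mapsto\omega\sigma_{G'_1}\omega^{-1}$ agree on $\Gamma_E$ and on $\gamma$, they agree everywhere. In the residual $D_4$ case the paper runs the same argument over the index-$2$ subfield $L$ with $\Gamma_{E/L}$ cyclic of order $3$, and then finishes with a short explicit computation in ${\rm Aut}(\ES{D}_{\rm a})$ for $D_4$ showing that the obstruction element $\omega_1\in\Omega$ must fix $\alpha_0$, hence is trivial. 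Without singling out the extension $E$ and exploiting $\Gamma_{E/F}\hookrightarrow{\rm Aut}(\ES{D})$, your pigeonhole/Chebotarev scheme has no mechanism to overcome the covering-by-proper-subgroups obstruction you yourself raised.
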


\begin{marema}
{\rm 
On a besoin en fait que les donnŽes locales soient Žquivalentes en un ensemble de places auquel s'applique le thŽorme de Tchebotarev, 
c'est-ˆ-dire qui soit analytiquement dense. Sinon, il est possible de produire un groupe $G$ et deux donnŽes endoscopiques  elliptiques  
$\bs{G}'_1$ et $\bs{G}'_2$ de $G$ (avec $\bs{a}=1$) qui vŽrifient les conditions suivantes:
\begin{itemize}
\item les donnŽes locales $\bs{G}'_{1,v}$ et $\bs{G}'_{2,v}$ sont simultanŽment elliptiques ou non elliptiques, 
et l'ensemble des places $v$ de $F$ o elles sont elliptiques est infini;
\item les donnŽes locales $\bs{G}'_{1,v}$ et $\bs{G}'_{2,v}$ sont Žquivalentes en toute place $v$ de $F$ o elles sont 
elliptiques;
\item les donnŽes $\bs{G}_1$ et $\bs{G}_2$ sont non Žquivalentes.
\end{itemize}
}
\end{marema}

\subsection{RŽduction au cas o $\hat{G}=\hat{G}_{\rm AD}$ est simple}\label{rŽduction}Revenons ˆ $F$ global ou local. Soit $G_{\rm SC}$ le revtement simplement connexe du groupe dŽrivŽ de $G$. Son groupe dual est le groupe adjoint $\hat{G}_{\rm AD}= \hat{G}/Z(\hat{G})$, et toute donnŽe endoscopique $\bs{G}'=(G'\!,\ES{G}'\!,s)$ de $(G,\bs{a})$ dŽfinit 
une donnŽe endoscopique $\smash{\overline{\bs{G}}}'=(\smash{\overline{G}},\smash{\overline{\ES{G}}}',\bar{s})$ de $G_{\rm SC}$. L'ŽlŽment $\bar{s}$ est l'image de $s$ dans $\hat{G}_{\rm AD}$, 
la composante neutre de son centralisateur $Z_{\hat{G}_{\rm AD}}(\bar{s})^\circ= \hat{G}'/Z(\hat{G})$ est munie de l'action galoisienne dŽduite de $\sigma_{G'}$, $\smash{\overline{G}}'$ est un $F$-groupe rŽductif connexe quasi-dŽployŽ dont $\hat{G}'/Z(\hat{G})$ est le groupe dual, et $\smash{\overline{\ES{G}}}'= \ES{G}'/Z(\hat{G})$.

Soient $\bs{G}'_{1}=(G'_{1},\ES{G}'_{1},s_1)$ et $\bs{G}'_{2}=(G'_{2},\ES{G}'_{2},s_2)$ deux donnŽes endoscopiques de $(G,\bs{a})$. 
Elles dŽfinissent comme ci-dessus des donnŽes endoscopiques $\smash{\overline{\bs{G}}}'_1$ et $\smash{\overline{\bs{G}}}'_2$ 
de $G_{\rm SC}$. Par dŽfinition, un ŽlŽment $x$ de $\hat{G}$ est une Žquivalence entre 
$\bs{G}'_1$ et $\bs{G}'_2$ si et seulement si l'image $\bar{x}$ de $x$ dans $\hat{G}_{\rm AD}$ est une Žquivalence entre $\smash{\overline{\bs{G}}}'_1$ et $\smash{\overline{\bs{G}}}'_2$. 
Si $F$ est global, l'application $\bs{G}'\mapsto \smash{\overline{\bs{G}}}'$ commute ˆ la localisation. 
On en dŽduit qu'il suffit de prouver la proposition de \ref{le rŽsultat} dans le cas o $G$ est semisimple simplement connexe et (forcŽment) 
$\bs{a}=1$. 

On suppose dŽsormais $G=G_{\rm SC}$ (et toujours $F$ global ou local). On fixe une paire de Borel $(\hat{B},\hat{T})$ de $\hat{G}$ conservŽe par l'action galoisienne $\sigma \mapsto \sigma_G$. 
On note $\Sigma$ l'ensemble des racines de $\hat{T}$ dans $\hat{G}$ et $\Sigma^+$, resp. $\Delta$, le sous-ensemble de racines positives, resp. simples, d\'etermin\'e par $\hat{B}$.  Notons $\ES{D}$ le diagramme de Dynkin dont l'ensemble de sommets est $\Delta$. Puisque la paire $(\hat{B},\hat{T})$ est stable par l'action galoisienne, cette dernire induit une 
action sur $\ES{D}$ et peut-tre considŽrŽe comme un homomorphisme $\Gamma_F \rightarrow {\rm Aut}(\ES{D})$. On a la dŽcomposition
$$
\hat{G}= \hat{G}_1\times \cdots \times\hat{G}_d
$$
o $\hat{G}_i$ est simple et adjoint. On note 
$$
G= G_1\times \cdots \times G_d
$$
la dŽcomposition duale: $G_i$ est un groupe semisimple simplement connexe, dŽfini sur $\overline{F}$ et (absolument) quasi-simple. Pour 
$i=1,\ldots ,d$, on note $\ES{D}_i$ la composante connexe de $\ES{D}$ correspondant ˆ $\hat{G}_i$. On a la dŽcomposition
$$
\ES{D}= \ES{D}_1\times \cdots \times \ES{D}_d.
$$
Le groupe $G$ est un produit direct de $F$-groupes semisimples simplement connexes et $F$-quasi-simples, chacun d'eux correspondant ˆ une 
orbite de $\Gamma_F$ dans l'ensemble $\{\ES{D}_1,\ldots ,\ES{D}_d\}$. 

On suppose dans un premier temps que $G$ est $F$-quasi-simple, c'est-ˆ-dire que $\Gamma_F$ agit transitivement sur 
l'ensemble $\{\ES{D}_1,\ldots ,\ES{D}_d\}$. Soit $F_1/F$ l'extension sŽparable finie (de degrŽ $d$) telle que $\Gamma_{F_1}$ soit le stabilisateur 
de $\ES{D}_1$ dans $\Gamma_F$. Alors \cite[6.21]{BT} $G_1$ est dŽfini sur $F_1$ et $G\simeq R_{F_1/F}(G_1)$ o $R_{F_1/F}$ est le foncteur restriction des scalaires 
ˆ la Weil. Reprenons brivement la description de ${^LG}$ telle qu'elle est donnŽe dans \cite[5.1]{Bor}. Pour un groupe $A$, un sous-groupe d'indice fini $A_1\subset A$, et un groupe $V_1$ 
muni d'une structure de $A_1$-module (ˆ gauche), on note $V=I_{A_1}^A(V_1)$ l'ensemble des fonctions $v: A\rightarrow V_1$ telles $v(a_1 a)= a_1\cdot a(v)$ pour tout $(a_1,a)\in A_1\times A$. 
On munit $V$ de la structure de groupe donnŽe par $(vv')(a)=v(a)v'(a)$, et de la structure de $A$-module donnŽe par 
$(a\cdot v)(a')= v(a'a)$. Pour $a\in A$, on note $V_a$ le sous-groupe de $V$ formŽ des fonctions $v$ ˆ support dans $A_1a$. 
On a la dŽcomposition $V= \prod_{a\in A_1\backslash A}V_a$ et les groupes $V_a$ sont permutŽs par $A$. Notons $e$ l'ŽlŽment neutre de $A$. 
Le groupe $V_e$ est un sous-$A_1$-module de $V$, et 
l'application $V\rightarrow V_1,\, v\mapsto v(e)$ est un homomorphisme de $A_1$-modules qui induit un isomorphisme de $V_e$ sur $V_1$. Appliquons cela 
ˆ $A= \Gamma_F$ et $A_1= \Gamma_{F_1}$, et ˆ $V_1= \hat{G}_1$ muni de l'action galoisienne $\tau \mapsto \tau_{G_1}$ de $\Gamma_{F_1}$. Le groupe $\hat{G}$  s'identifie \`a 
$I_{\Gamma_{F_1}}^{\Gamma_F}(\hat{G}_1)$ muni de l'action galoisienne $\sigma \mapsto \sigma_G$ de $\Gamma_F$ donnŽe par
$$
\sigma_G(g)(\sigma')= g(\sigma'\! \sigma),\quad \sigma'\in \Gamma_F.
$$
La paire de Borel $(\hat{B},\hat{T})$ de $\hat{G}$ est Žgale ˆ $(I_{\Gamma_{F_1}}^{\Gamma_F}(\hat{B}_1), I_{\Gamma_{F_1}}^{\Gamma_F}(\hat{T}_1))$ pour une 
paire de Borel $(\hat{B}_1,\hat{T}_1)$ de $\hat{G}_1$ bien dŽterminŽe, et la paire $(\hat{B},\hat{T})$ est conservŽe par l'action de $\Gamma_F$ si et seulement si 
la paire $(\hat{B}_1,\hat{T}_1)$ est conservŽe par l'action de $\Gamma_{F_1}$, ce que l'on suppose. Enfin puisque l'homomorphisme naturel $W_F \rightarrow \Gamma_F$ induit une 
bijection $W_{F_1}\backslash W_F \rightarrow \Gamma_{F_1} \backslash \Gamma_F$, l'application $g \mapsto g \circ (W_F \rightarrow \Gamma_F)$ 
induit un isomorphisme $ I_{\Gamma_{F_1}}^{\Gamma_F}(\hat{G}_1)\buildrel \simeq \over{\longrightarrow } I_{W_{F_1}}^{W_F}(\hat{G}_1)$ dont 
l'inverse est $W_F$-Žquivariant. On pose
$$
{^LG}=I_{W_{F_1}}^{W_{F}}(\hat{G}_1) \rtimes W_F =  I_{\Gamma_{F_1}}^{\Gamma_F}(\hat{G}_1)\rtimes W_F.
$$
Posons $W= N_{\hat{G}}(\hat{T})/\hat{T}$ et $W_1= N_{\hat{G}_1}(\hat{T}_1)/\hat{T}_1$. 
On a l'ŽgalitŽ $W= I_{\Gamma_{F_1}}^{\Gamma_F}(W_1)$. 

Soit $\bs{G}'= (G'\!,\ES{G}'\!,s)$ une donnŽe endoscopique de $G$, avec $s\in \hat{T}$. On lui associe comme suit une donnŽe endoscopique
$$
\bs{G}'_{F_1}=(G'_1,\ES{G}'_1,s_1)
$$
de $G_1$. Posons $\hat{G}'=Z_{\hat{G}}(s)^\circ$ et $\hat{B}'= \hat{B} \cap \hat{G}'$. Pour chaque $w\in W_F$ d'image $\sigma$ dans $\Gamma_F$, choisissons un ŽlŽment $(g(w),w)\in \ES{G}'$ 
tel que ${\rm Int}_{g(w)}\circ \sigma_G$ conserve la paire de Borel $(\hat{B}'\!,\hat{T})$ de $\hat{G}'$. La classe $\hat{T}g(w)$ est uniquement dŽterminŽe. Il existe donc 
un ŽlŽment bien dŽterminŽ $\eta_{G'}(\sigma)$ de $W$ tel que l'action par conjugaison de $(g(w),w)$ sur $\hat{T}$ soit Žgale ˆ
$$
\sigma_{G'}= \eta_{G'}(\sigma)\sigma_G;
$$ 
o l'on identifie $\eta_{G'}(\sigma)$ ˆ l'automorphisme ${\rm Int}_{\eta_{G'}(\sigma)}$ de $\hat{T}$. L'ŽlŽment $s$ est fixŽ par 
l'action $\sigma\mapsto \sigma_{G'}$. L'application $\sigma \mapsto \eta_{G'}(\sigma)$ est un $1$-cocycle de $\Gamma_F$ ˆ valeurs dans $W$. 
Pour $\tau\in \Gamma_{F_1}$, posons\footnote{La notation n'est pas trs heureuse: rappelons que pour $\sigma\in \Gamma_F$, $\eta_{G'}(\sigma)$ est un ŽlŽment de 
$W= \prod_{\sigma'\in \Gamma_{F_1}\backslash \Gamma_F}W_{\sigma'}$, et que pour $\sigma'\in \Gamma_F$, $\eta_{G'}(\sigma)(\sigma')$ est la composante de $\eta_{G'}(\sigma)$ sur $W_{\sigma'}$.} $\eta_{G'\!,1}(\tau)= \eta_{G'}(\tau)(1)$. L'application $\tau \mapsto \eta_{G'\!,1}(\tau)$ est un $1$-cocycle 
de $\Gamma_{F_1}$ ˆ valeurs dans $W_1$. Notons $\tau \mapsto \tau_{G'\!,1}$ l'action de $\Gamma_{F_1}$ sur $\hat{T}_1$ donnŽe par
$$
\tau_{G'\!,1}= \eta_{G'\!,1}(\tau)\tau_{G_1}.
$$ 
On prend pour $s_1$ l'ŽlŽment $s(1)\in \hat{T}_1$. Il est fixŽ par l'action galoisienne $\tau\mapsto \tau_{G'\!,1}$ sur $\hat{T}_1$. Posons $\hat{G}'_1= Z_{\hat{G}_1}(s_1)^\circ$ et $\hat{B}'_1 = \hat{B}_1 \cap \hat{G}'_1$. Pour chaque $w_1\in W_{F_1}$ d'image $\tau$ dans $\Gamma_{F_1}$, choisissons un reprŽsentant 
$\tilde{\eta}_{G'\!,1}(w_1)=\tilde{\eta}_{G'\!,1}(\tau)$ de $\eta_{G'\!,1}(\tau)$ dans $N_{\hat{G}_1}(\hat{T}_1)$. L'automorphisme 
${\rm Int}_{\tilde{\eta}_{G'\!,1}(\tau)}\circ \tau_{G_1}$ conserve la paire de Borel 
$(\hat{B}'_1,\hat{T}_1)$ de $\hat{G}'_1$. Soit 
$$
\ES{G}'_1 = \{(g'_1\tilde{\eta}_{G'\!,1}(w_1),w_1): g'_1\in \hat{G}'_1,\, w_1\in W_{F_1}\}\subset {^L(G_1)}.
$$
L'ensemble $\ES{G}'_1$ est un groupe qui normalise $\hat{G}'_1$. On en dŽduit de la manire habituelle une $L$-action de $\Gamma_{F_1}$ sur $\hat{G}'_1$. Soit $G'_1$ 
un $F_1$-groupe rŽductif connexe quasi-dŽployŽ dont 
$\hat{G}'_1$, muni de cette action galoisienne, est le groupe dual.  Le triplet 
$\bs{G}'_{F_1}=(G'_1,\ES{G}'_1,s_1)$ est une donnŽe endoscopique de $G_1$. 

\begin{monlem}
L'application $\bs{G}'\mapsto \bs{G}'_{F_1}$ induit une bijection de l'ensemble des classes d'Žquivalence de donnŽes endoscopiques de $G$ sur 
l'ensemble des classes d'Žquivalence de donnŽes endoscopiques de $G_1$.
\end{monlem}

\begin{proof}
RŽciproquement, soit $\bs{G}'_1= (G'_1,\ES{G}'_1,s_1)$ une donnŽe endoscopique de $G_1$, avec $s_1\in \hat{T}_1$. On lui associe comme suit une donnŽe endoscopique 
$$
{\rm Res}_{F_1/F}(\bs{G}'_1)= (G'\!,\ES{G}'\!,s)
$$
de $G$. Posons $\hat{G}'_1= Z_{\hat{G}_1}(s_1)^\circ$ et $\hat{B}'_1= \hat{B}_1 \cap \hat{G}'_1$. 
Pour chaque $w_1\in W_{F_1}$ d'image $\tau$ dans $\Gamma_{F_1}$, choisissons un ŽlŽment $(g_1(w_1),w_1)\in \ES{G}'_1$ tel que l'automorphisme
${\rm Int}_{g_1(w_1)\circ }\tau_{G_1}$ conserve la paire de Borel $(\hat{B}'_1,\hat{T}_1)$ de $\hat{G}'_1$. La classe $\hat{T}_1 g_1(w_1)$ est bien dŽterminŽe. Il 
existe donc un ŽlŽment bien dŽterminŽ $\eta_{G'_1}(\tau)$ de $W_1$ tel que l'action par conjugaison de $(g_1(w_1),w_1)$ sur $\hat{T}_1$ soit donnŽe par
$$
\tau_{G'_1}= \eta_{G'_1}(\tau) \tau_{G_1}.
$$
L'application $\tau \mapsto \eta_{G'_1}(\tau)$ est un $1$-cocycle. Notons $\alpha_1\in {\rm H}^1(\Gamma_{F_1}, W_1)$ sa classe de cohomologie, et 
soit $\alpha\in {\rm H}^1(\Gamma_F,W)$ l'image de $\alpha_1$ par l'inverse de l'isomorphisme de Shapiro
$$
{\rm H}^1(\Gamma_F,I_{\Gamma_{F_1}}^{\Gamma_F}(W_1))\buildrel \simeq \over{\longrightarrow} {\rm H}^1(\Gamma_{F_1},W_1),\,\beta \mapsto 
(\tau \mapsto \beta(\tau)(1)).
$$
Choisissons un $1$-cocycle $\sigma \mapsto \eta_{G'_1,G}(\sigma)$ de $\Gamma_F$ ˆ valeurs dans $W$ qui soit dans la classe de cohomologie $\alpha$. 
Quitte ˆ remplacer $\eta_{G'_1,G}$ par un $1$-cocycle cohomologue, on peut supposer, et on suppose, que 
$\eta_{G'_1,G}(\tau)(1)= \eta_{G'_1}(\tau)$ pour tout $\tau\in \Gamma_{F_1}$. Notons $\sigma \mapsto \sigma_{G'_1,G}$ l'action de $\Gamma_F$ sur $\hat{T}$ donnŽe par
$$
\sigma_{G'_1,G}= \eta_{G'_1,G}(\sigma)\sigma_{G}.
$$
Soit $\Gamma_F \rightarrow \hat{T}_1,\, \sigma \mapsto s(\sigma)$ l'application dŽfinie par
$$
s(\sigma)= \eta_{G'_1,G}(\sigma)(1)^{-1}(s_1).
$$
Pour $\tau\in \Gamma_{F_1}$ et $\sigma\in \Gamma_F$, on a
\begin{eqnarray*}
s(\tau \sigma)&= &[\eta_{G'_1,G}(\tau)\tau_G(\eta_{G'_1,G}(\sigma))](1)^{-1}(s_1)\\
&=& \tau_{G}(\eta_{G'_1,G}(\sigma))(1)^{-1}\eta_{G'_1,G}(\tau)(1)^{-1}(s_1).
\end{eqnarray*}
Or
$$
\tau_G(\eta_{G'_1,G}(\sigma))(1)= \eta_{G'_1,G}(\sigma)(\tau)=\tau_{G_1}(\eta_{G'_1,G}(\sigma)(1)),
$$
d'o
\begin{eqnarray*}
s(\tau \sigma)&= &\tau_{G_1}[\eta_{G'_1,G}(\sigma)(1)^{-1} (\eta_{G'_1}(\tau)\tau_{G_1})^{-1}(s_1)]\\
& = & \tau_{G_1}(s(\sigma))
\end{eqnarray*}
car $s_1$ est fixŽ par l'action $\tau \mapsto \tau_{G'_1}=\eta_{G'_1}(\tau)\tau_{G_1}$ de $\Gamma_{F_1}$. Par consŽquent l'application 
$$
\Gamma_F\rightarrow \hat{T},\, \sigmaÊ\mapsto s(\sigma)
$$
est un ŽlŽment de $\hat{T}$, qui vŽrifie $s(1)=s_1$. On vŽrifie facilement que 
$s$ est fixŽ par l'action $\sigma \mapsto \sigma_{G'_1,G}(\sigma)$ de $\Gamma_F$. Posons $\hat{G}'= Z_{\hat{G}}(s)^\circ$ et 
$\hat{B}'= \hat{G}' \cap \hat{B}$. Pour chaque $w\in W_F$ d'image $\sigma$ dans $\Gamma_F$, choisissons un reprŽsentant $\tilde{\eta}_{G'_1,G}(w)= \tilde{\eta}_{G'_1,G}(\sigma)$ de 
$\eta_{G'_1,G}(\sigma)$ dans $N_{\hat{G}}(\hat{T})$. L'auto\-morphisme 
${\rm Int}_{\tilde{\eta}_{G'_1,G}(\sigma)}\circ \sigma_{G}$ conserve la paire de Borel 
$(\hat{B}'\!,\hat{T})$ de $\hat{G}'$. Soit 
$$
\ES{G}' = \{(g'\tilde{\eta}_{G'_1,G}(w),w): g'\in \hat{G}',\, w\in W_{F}\}\subset {^LG}.
$$
L'ensemble $\ES{G}'$ est un groupe qui normalise $\hat{G}'$. On en dŽduit une $L$-action de $\Gamma_F$ sur $\hat{G}'$. Soit $G'$ 
un $F$-groupe rŽductif connexe quasi-dŽployŽ dont 
$\hat{G}'$, muni de cette action galoisienne, est le groupe dual. 
Le triplet ${\rm Res}_{F/F_1}(\bs{G}'_1)=(G'\!,\ES{G}'\!,s)$ est une donnŽe endoscopique de $G$, bien dŽterminŽe ˆ Žquivalence prs par 
un ŽlŽment $x$ de $N_{\hat{G}}(\hat{T})$ tel que 
$x(1)\in {\rm Aut}(\bs{G}'_1)$ et ${\rm Int}_{\eta_{G'_1}(\tau)} \circ \tau_{G_1}(\bar{x}(1))= \bar{x}(1)$ pour tout $\tau\in W_{F_1}$; o 
$\bar{x}$ est l'image de $x$ dans $W$. Par construction, on a
$$
({\rm Res}_{F/F_1}(\bs{G}'_1))_{F_1}= \bs{G}'_1.
$$
Cela prouve le lemme.
\end{proof}

Soit $\bs{G}''$ une autre donnŽe endoscopique de $G$. Si $x\in \hat{G}$ est une \'equivalence entre $\bs{G}'$ et $\bs{G}''$, alors $x(1)$  est une Žquivalence entre $\bs{G}'_{F_1}$ et $\bs{G}''_{F_1}$. RŽciproquement, si $x_{1}\in \hat{G}_1$ est une Žquivalence entre $\bs{G}'_{F_1}$ et $\bs{G}''_{F_1}$, alors on  peut construire un $x\in \hat{G}$  qui est une Žquivalence entre $\bs{G}'$ et $\bs{G}''$ et tel que $x(1)=x_{1}$. 

 Comme la propriŽtŽ d'tre $F$-quasi-simple n'est pas conservŽe par localisation, il nous faut maintenant revenir au cas gŽnŽral. \'Ecrivons 
$\ES{D} =\bigcup_{k=1}^r (\ES{D}_{k,1}\cup \cdots \cup \ES{D}_{k,d_k})$ o, pour chaque $k$, $\ES{D}_{k,1}\cup \cdots \cup \ES{D}_{k,d_k}$ est une orbite dans l'ensemble des 
composantes connexes de $\ES{D}$. Pour $k=1,\ldots ,r$, soit $F_k/F$ l'extension sŽparable finie telle que $
\Gamma_{F_k}$ soit le stabilisateur de $\ES{D}_{k,1}$ dans $\Gamma_F$. Le groupe $G_k$ correspondant ˆ $\ES{D}_{k,1}$ est dŽfini sur $F_k$, et l'on a 
$G \simeq \prod_{k=1}^r  H_{k}$ o\`u $H_k = {\rm Res}_{F_k/F}(G_k)$. Pour $k=1,\ldots ,r$, soit $\bs{H}'_{\!k}= (H'_k,\ES{H}'_k,s_k)$ une donnŽe endoscopique de $H_k$. La famille 
des $\bs{H}'_{\!k}$ dŽfinit une donnŽe endoscopique $\bs{G}' =(G'\!,\ES{G}'\!,s)$ de $G$: on prend $s=(s_1,\ldots ,s_r)$, $G'=H'_1\times \cdots \times H'_r$, et 
$\ES{G}'$ est l'ensemble des $((h_1,\ldots ,h_r),\sigma)\in {^LG}$ tels que pour chaque $k$, $(h_k,\sigma)$ appartient ˆ $\ES{H}'_k$. Toute donnŽe endoscopique de $\bs{G}$ est obtenue de cette manire. Par abus d'Žcriture, on note $\bs{G}'= \prod_{k=1}^r\bs{H}'_{\!k}$. \`A chaque donnŽe $\bs{H}'_{\!k}$ est associŽe comme ci-dessus une donnŽe 
endoscopique $\bs{G}'_{k}= (\bs{H}'_{\!k})_{F_k}$ de $G_k$. L'application qui ˆ $\bs{G}'$ associe la famille $\{\bs{G}'_1,\ldots ,\bs{G}'_k\}$ est compatible ˆ la relation d'Žquivalence entre donnŽes endoscopiques en un sens Žvident (dŽduit par produit du cas $r=1$). 

Si $F$ est global, pour chaque place $v$ de $F$, chaque $k$, et chaque place $w$ de $F_k$ au-dessus de $v$, le complŽtŽ $F_{k,w}$ de $F_k$ en $w$ est une extension 
sŽparable finie de $F_v$. Le groupe $G_v= G\times_F F_v$ est isomorphe ˆ $\prod_{k=1}^r H_{k,v}$ avec 
$$
H_{k,v}= {\rm Res}_{F_k/F}(G_k)\times_F F_v \simeq  \prod_{w_k\vert v} {\rm Res}_{F_{k,w_k}/F_v}(G_{k,w_k})
$$
o $w_k$ parcourt les places de $F_k$ au-dessus de $v$, et $G_{k,w_k}= G_k \times_{F_k}F_{k,w_k}$. 
C'est donc encore un groupe du mme type, c'est-ˆ-dire un produit fini de groupes $F_v$-quasi-simples 
simplement connexes. L'application qui ˆ une donnŽe endoscopique $\bs{G}'$ de $G$ associe la famille $\{\bs{G}'_1,\ldots, \bs{G}'_r\}$ est compatible ˆ la localisation 
au sens suivant. \'Ecrivons $\bs{G}'= \prod_{k=1}^r \bs{H}'_{\!k}$. Pour chaque place $v$ de $F$, et pour chaque $k$, la donnŽe endoscopique $\bs{H}'_{\!k,v}$ de $H_{k,v}$ obtenue par 
localisation de $\bs{H}'_{\!k}$ en $v$ se dŽcompose (avec le mme abus d'Žcriture que plus haut) en
$$
\bs{H}'_{\!k,v} = \prod_{w_k\vert v} \bs{H}'_{\!k,w_k}
$$
pour des donnŽes endoscopiques $\bs{H}'_{\!k,w_k}$ de $H_{k,w_k}={\rm Res}_{F_{k,w_k}/F_v}(G_{k,w_k})$. D'autre part pour chaque place $w_k$ de $F_k$ au-dessus de $v$, la donnŽe endoscopique $\bs{G}'_k = (\bs{H}'_{\!k})_{F_k}$ de $G_k$ donne par localisation une donnŽe endoscopique $\bs{G}'_{k,w_k}$ de $G_{k,w_k}$. La compatibiltŽ en question est (pour tout $v$, tout $k$, et tout $w_k\vert v$)
$$
(\bs{H}'_{\!k,w_k})_{F_{k,w_k}}= \bs{G}'_{k,w_k}.
$$  
On en dŽduit qu'il suffit de prouver la proposition de \ref{le rŽsultat} dans le cas $r=1$ et $d_1=1$, 
\cad le cas o $\hat{G}\;(=\hat{G}_{\rm AD})$ est simple. 

\subsection{Hypothses et dŽfinitions}\label{hypothses et dŽfinitions}
Continuons avec les notations de \ref{le rŽsultat} et \ref{rŽduction}. Sauf prŽcision, le corps $F$ est global ou local. On suppose jusqu'ˆ la fin de l'article que 
$G$ est quasi-simple et simplement connexe, \cad que $\hat{G}= \hat{G}_{\rm AD}$ et $\ES{D}$ est connexe. Pour allŽger l'Žcriture, nous utilisons dŽsormais la forme 
\og galoisienne\fg des $L$-groupes: $^LG=\hat{G}\rtimes \Gamma_{F}$. L'adaptation de ce qui suit  \`a la forme \og groupes de Weil\fg ne pose aucun probl\`eme. 

Notons $\alpha_{0}$ l'oppos\'ee de la plus grande racine dans $\Sigma^+$ et $\Delta_{\rm a}=\{\alpha_{0}\}\cup \Delta$. En \'ecrivant $-\alpha_{0}$ dans la base $\Delta$, on obtient une relation
$$
\sum_{\alpha\in \Delta_{\rm a}}d(\alpha)\alpha=0\leqno{(1)}
$$
o\`u $d(\alpha_{0})=1$ et, pour $\alpha\in \Delta$, $d(\alpha)$ appartient \`a l'ensemble ${\mathbb N}_{>0}$ des entiers strictement positifs. On sait que   l'espace des relations entre les \'el\'ements de $\Delta_{\rm a}$ est la droite port\'ee par la relation (1). Cela implique que
\begin{enumerate}[leftmargin=17pt]
\item[(2)]la relation (1) est la seule relation lin\'eaire entre les \'el\'ements de $\Delta_{\rm a}$ dont les coefficients sont entiers relatifs et dont au moins un coefficient vaut $1$.
\end{enumerate}
Rappelons aussi la propri\'et\'e suivante. Pour $\beta\in \Sigma^+$, \'ecrivons $\beta=\sum_{\alpha\in \Delta}m(\alpha)\alpha$ avec  $m(\alpha)\in {\mathbb N}$ pour tout $\alpha\in \Delta$. Alors 
\begin{enumerate}[leftmargin=17pt]
\item[(3)]$m(\alpha)\leq d(\alpha)$ pour tout $\alpha\in \Delta$.
\end{enumerate}

Notons $\ES{D}_{\rm a}$ le diagramme de Dynkin compl\'et\'e dont l'ensemble de sommets est $\Delta_{\rm a}$. Notons ${\rm Aut}(\ES{D})\subset {\rm Aut}(\ES{D}_{\rm a})$ les groupes d'automorphismes de $\ES{D}$ et $\ES{D}_{\rm a}$.  Remarquons que, puisque $\hat{G}$ est adjoint, ${\rm Aut}(\ES{D}_{\rm a})$ se plonge naturellement dans le groupe d'automorphismes de $\hat{T}$. Puisque la paire $(\hat{B},\hat{T})$ est conserv\'ee par l'action galoisienne, l'action galoisienne se restreint en une action sur $\Delta$ et $\Delta_{\rm a}$, et peut \^etre consid\'er\'ee comme un homomorphisme $\Gamma_{F}\to {\rm Aut}(\ES{D})$. D'ailleurs comme on le sait, la donn\'ee de l'action galoisienne est \'equivalente \`a celle de cet homomorphisme. On note $E$ l'extension galoisienne finie de $F$ telle que $\Gamma_{E}$ soit le noyau de cet homomorphisme. 

Rappelons qu'on a notŽ $W$ le groupe de Weyl de $\hat{G}$ relatif \`a $\hat{T}$. Soit $\Omega$ le sous-groupe des \'el\'ements de $W$ qui conservent $\Delta_{\rm a}$. L'application qui \`a un \'el\'ement de $\Omega$ associe son action sur $\ES{D}_{\rm a}$ identifie $\Omega$ \`a un sous-groupe de ${\rm Aut}(\ES{D}_{\rm a})$. On sait \cite{Bou}[VI.4.3] 
que c'est un sous-groupe ab\'elien distingu\'e de ${\rm Aut}(\ES{D}_{\rm a})$, et que
$$
{\rm Aut}(\ES{D}_{\rm a})= \Omega \rtimes {\rm Aut}(\ES{D}).
$$
Signalons que, pour $\tau\in {\rm Aut}(\ES{D}_{\rm a})$, 
on a $d(\tau(\alpha))=d(\alpha)$ pour toute racine $\alpha\in \Delta_{\rm a}$:  en appliquant $\tau^{-1}$ \`a (1), on obtient
$\sum_{\alpha\in \Delta_{\rm a}}d(\tau(\alpha))\alpha=0$; c'est une relation entre les \'el\'ements de $\Delta_{\rm a}$  qui v\'erifie les conditions de (2), donc c'est la relation (1). On voit en inspectant chaque syst\`eme de racines que:
\begin{enumerate}[leftmargin=17pt]
\item[(4)]l'application $\omega \mapsto \omega (\alpha_{0})$ est une bijection de $\Omega$ sur l'ensemble des racines $\alpha\in \Delta_{\rm a}$ telles que $d(\alpha)=1$.
\end{enumerate}
%

\subsection{La construction de Langlands}\label{la construction de Langlands}
On fixe  pour tout $n\in  {\mathbb N}_{>0}$ un \'el\'ement $\zeta_{n}\in {\mathbb C}^{\times}$ qui est une racine primitive de $1$ d'ordre $n$. Soit $\bs{G}'=(G'\!,\ES{G}'\!,s)$ une donn\'ee endoscopique de $G$ (rappelons que $\ES{G}'\subset {^LG}= \hat{G} \rtimes \Gamma_F$). \`A \'equivalence pr\`es, on peut supposer, et on suppose, $s\in \hat{T}$. Dans ce paragraphe \ref{la construction de Langlands}, on impose 
la condition:
\begin{enumerate}[leftmargin=17pt]
\item[(1)]$s$ est d'ordre fini.
\end{enumerate}
On note $d$ cet ordre.

Nous allons rappeler les r\'esultats de Langlands \cite[pp.~708-709]{L}. Commenons par reprendre la construction habituelle 
(cf. \ref{rŽduction}). On pose $\hat{G'}= Z_{\hat{G}}(s)^\circ \supset \hat{T}$, et on fixe un sous-groupe de Borel $\hat{B}'$ de $\hat{G}'$ contenant $\hat{T}$. Pour tout $\sigma\in \Gamma_{F}$, on peut choisir un \'el\'ement $(g(\sigma),\sigma)\in \ES{G}'$ dont l'action par conjugaison conserve  la paire $(\hat{B}',\hat{T})$. La classe $\hat{T}g(\sigma)$ est uniquement d\'etermin\'ee. Il existe donc un \'el\'ement bien d\'etermin\'e $w_{G'}(\sigma)\in W$ tel que l'action par conjugaison de $(g(\sigma),\sigma)$ sur $\hat{T}$ soit \'egale \`a $w_{G'}(\sigma)\sigma_{G}$. L'application
$$
\sigma\mapsto \sigma_{G'}=
w_{G'}(\sigma)\sigma_{G}
$$
est un homomorphisme de $\Gamma_{F}$ dans le groupe d'automorphismes de $\hat{T}$ qui fixent $s$. 

Pour $k\in \{0,\ldots,d-1\}$, posons $\mathfrak{Y}_{k}=\{\alpha\in \Sigma; \alpha(s)=\zeta_{d}^k\}$. L'ensemble $\mathfrak{Y}_{0}$ est celui des racines de $\hat{T}$ dans $\hat{G}'$. On note $\mathfrak{X}_{0}$ l'ensemble de racines simples relatif \`a $\hat{B}'$. On introduit dans $\Sigma$ la relation d'ordre partiel $\leq_{\hat{B}'}$ suivante: pour $\alpha,\,\beta\in \Sigma$, $\alpha \leq_{\hat{B}'}\beta$ si et seulement si $\beta-\alpha$ est \'egal \`a une combinaison lin\'eaire des \'el\'ements de $\mathfrak{X}_{0}$ \`a coefficients dans ${\mathbb N}$. Pour $k\in \{1,\ldots,d-1\}$, notons $\mathfrak{Z}_{k}$ l'ensemble des \'el\'ements de $\mathfrak{Y}_{k}$ qui ne sont pas combinaisons lin\'eaires d'\'el\'ements de $\bigcup_{j=0,\ldots,k-1}\mathfrak{Y}_{j}$ \`a coefficients dans ${\mathbb Z}$. En particulier $\mathfrak{Z}_{1}=\mathfrak{Y}_{1}$. On note $\mathfrak{X}_{k}$ l'ensemble des \'el\'ements de $\mathfrak{Z}_{k}$ qui sont minimaux pour l'ordre $\leq_{\hat{B}'}$. On pose $\mathfrak{X}=\bigcup_{k=0,\ldots,d-1}\mathfrak{X}_{k}$. Le r\'esultat de Langlands est:
\begin{enumerate}[leftmargin=17pt]
\item[(2)]il existe $u\in W$ tel que $u(\mathfrak{X})=\Delta$ ou $u(\mathfrak{X})=\Delta_{\rm a}$.
\end{enumerate}
Quitte \`a remplacer $\bs{G}'$ par la donn\'ee \'equivalente conjugu\'ee par un rel\`evement de $u$ dans le normalisateur $N_{\hat{G}}(\hat{T})$ de $\hat{T}$ dans $\hat{G}$ et \`a remplacer $\hat{B}'$ par $u(\hat{B}')$, on peut supposer $u=1$; donc $\mathfrak{X}=\Delta$ ou $\mathfrak{X}= \Delta_{\rm a}$. Remarquons que, jusque-l\`a, les constructions ne d\'ependent que de $s$ et d'un choix de $\hat{B}'$ mais   pas de $\ES{G}'$. 

Justement, puisque les constructions ne d\'ependent que de $s$ et de $\hat{B}'$, elles sont conserv\'ees par le groupe des automorphismes de $\hat{T}$ qui fixent $s$ et conservent $\hat{B}'$ (conserver $\hat{B}'$ revient \`a conserver l'ensemble $\mathfrak{X}_0$ de racines simples relatif ˆ $\hat{B}'$). En particulier, pour tout $\sigma\in \Gamma_{F}$, l'automorphisme $\sigma_{G'}=w_{G'}(\sigma)\sigma_{G}$ de $\hat{T}$ conserve chaque $\mathfrak{X}_{k}$ et $\mathfrak{X}$ tout entier. L'automorphisme $\sigma_{G}$ conservant  $\Delta$ et $\Delta_{a}$, donc $\mathfrak{X}$, $w_{G'}(\sigma)$ conserve aussi $\mathfrak{X}$. Supposons d'abord $\mathfrak{X}=\Delta$. Le seul \'el\'ement de $W$ qui conserve $\Delta$ est l'identit\'e. Donc $w_{G'}(\sigma)=1$.   Il en r\'esulte que
\begin{enumerate}[leftmargin=17pt]
\item[(3)]si $\mathfrak{X}=\Delta$, alors $\ES{G}'=\hat{G'}\rtimes \Gamma_{F}$.
\end{enumerate}
Supposons maintenant $\mathfrak{X}=\Delta_{a}$.  Puisque $w_{G'}(\sigma)$ conserve $\Delta_{a}$, on a $w_{G'}(\sigma)\in \Omega$. Dans la suite, $w_{G'}(\sigma)$ et $\sigma_{G'}$ seront la plupart du temps consid\'er\'es comme des automorphismes de $\ES{D}_{\rm a}$. 
Notons $K$ l'extension galoisienne finie  de $F$ telle que $\Gamma_{K}$ soit le noyau de l'homomorphisme $\sigma\mapsto \sigma_{G'}$. On a
\begin{enumerate}[leftmargin=17pt]
\item[(4)]$E\subset K$ et la restriction \`a $\Gamma_{E}$ de l'application $\sigma\mapsto w_{G'}(\sigma)$ se quotiente en  une injection de $\Gamma_{K/E}$ dans $\Omega$.
\end{enumerate}
En effet, pour $\sigma\in \Gamma_{K}$, on a $w_{G'}(\sigma)\sigma_{G}=1$. Puisque ${\rm Aut}(\ES{D}_{\rm a})$ est produit semi-direct de $\Omega$ et ${\rm Aut}(\ES{D})$, 
cette \'egalit\'e entra\^{\i}ne $w_{G'}(\sigma)=1$ et $\sigma_{G}=1$. L'assertion (4) s'en d\'eduit.

\subsection{Equivalences de donn\'ees endoscopiques}\label{Žquivalences de donnŽes endoscopiques}
 On consid\`ere maintenant deux donn\'ees endoscopiques $\bs{G}'_{1}=(G'_{1},\ES{G}'_{1},s)$ et $\bs{G}'_{2}=(G'_{2},\ES{G}'_{2},s)$ de $G$, dont l'\'el\'ement $s$ est le m\^eme. Les groupes $\hat{G}'_{1}$ et $\hat{G}'_{2}$ sont donc les m\^emes et on note simplement $\hat{G}'$ ce groupe. On suppose comme dans le paragraphe pr\'ec\'edent que $s$ est d'ordre fini $d$. On applique les constructions pr\'ec\'edentes aux deux donn\'ees, en affectant d'indices $1$ et $2$ les objets associ\'es \`a nos deux donn\'ees. On peut choisir le m\^eme groupe de Borel $\hat{B}'$ pour les deux constructions. Puisque celles-ci ne d\'ependent que de $s$ et de ce choix de Borel, on obtient les m\^emes ensembles $\mathfrak{X}_{k}$ et $\mathfrak{X}$ pour les deux donn\'ees. Les seules diff\'erences entre nos donn\'ees sont les actions galoisiennes $\sigma\mapsto \sigma_{G'_{1}}$ et $\sigma\mapsto \sigma_{G'_{2}}$. L'assertion \ref{la construction de Langlands}.(3) entra\^{\i}ne que
\begin{enumerate}[leftmargin=17pt]
\item[(1)]si $\mathfrak{X}=\Delta$, les deux donn\'ees sont \'equivalentes.
\end{enumerate}

 Supposons $\mathfrak{X}=\Delta_{\rm a}$. Montrons que
\begin{enumerate}[leftmargin=17pt]
\item[(2)]les donn\'ees $\bs{G}'_{1}$ et $\bs{G}'_{2}$ sont \'equivalentes si et seulement s'il existe $\omega\in \Omega$ tel que 
$\omega(\mathfrak{X}_{k})=\mathfrak{X}_{k}$ pour $k=1,\ldots,d-1$ et $\omega \sigma_{G'_{1}}\omega^{-1}=\sigma_{G'_{2}}$ pour tout $\sigma\in \Gamma_{F}$.
\end{enumerate}
Si un tel $\omega$ existe, on le rel\`eve en un \'el\'ement quelconque  $x\in N_{\hat{G}}(\hat{T})$ et on v\'erifie que $x$ est une \'equivalence entre les deux donn\'ees. Inversement, soit $x$ une telle \'equivalence qui conjugue $\bs{G}'_{1}$ en $\bs{G}'_{2}$. On peut multiplier $x$ \`a gauche ou \`a droite par un \'el\'ement de $\hat{G}'$ et supposer que la conjugaison ${\rm Int}_x$ conserve la paire de Borel $(\hat{B'},\hat{T})$. Par d\'efinition, ${\rm Int}_x$ fixe $s$. Notons $\omega$ l'image de ${\rm Int}_x$ dans $W$. Comme on l'a d\'ej\`a dit, un automorphisme de $\hat{T}$ qui fixe $s$ et conserve $\hat{B}'$ conserve tous les objets d\'efinis au paragraphe pr\'ec\'edent. Donc $\omega$ conserve $\mathfrak{X}=\Delta_{\rm a}$, c'est-\`a-dire $\omega\in \Omega$. On a $\omega(\mathfrak{X}_{k})=\mathfrak{X}_{k}$ pour $k=1,\ldots,d-1$ comme on vient de le dire. Soit $\sigma\in \Gamma_{F}$. Pour $i=1,\,2$, on rel\`eve $w_{G'_{i}}(\sigma)$ en un \'el\'ement $g_{i}(\sigma)\in N_{\hat{G}}(\hat{T})$. On a $(g_{i}(\sigma),\sigma)\in \ES{G}'_{i}$. Par d\'efinition d'une \'equivalence, on a $x(g_{1}(\sigma),\sigma)x^{-1}\in \ES{G}'_{2}$, donc $xg_{1}(\sigma)\sigma_{G}(x)^{-1}=hg_{2}(\sigma)$ pour un $h\in \hat{G}'$. Tous les \'el\'ements autres que $h$ normalisant $\hat{T}$, $h$ le normalise aussi. Puisque les conjugaisons par $x$, $(g_{1}(\sigma),\sigma)$ et $(g_{2}(\sigma),\sigma)$ conservent le  groupe $\hat{B}'$, la conjugaison par $h$ le conserve aussi. Un \'el\'ement $y$ de $\hat{G}'$ tel que ${\rm Int}_y$ conserve $(\hat{B}',\hat{T})$ appartient \`a $\hat{T}$, donc $h\in \hat{T}$. En projetant dans $W$ l'\'egalit\'e    $xg_{1}(\sigma)\sigma_{G}(x)^{-1}=hg_{2}(\sigma)$, on obtient $\omega \sigma_{G'_{1}}\omega^{-1}=\sigma_{G'_{2}}$. Cela d\'emontre (2). 

\vskip1mm
Consid\'erons  une unique donn\'ee $\bs{G}'=(G'\!,\ES{G}'\!,s)$. On note ${\rm Out}(\bs{G}')$ le groupe des composantes connexes du groupe d'automorphismes de $\bs{G}'$. Si  $s$ est d'ordre fini et $\mathfrak{X}=\Delta_{\rm a}$,
  on d\'emontre de la m\^eme fa\c{c}on que
\begin{enumerate}[leftmargin=17pt]
\item[(3)]${\rm Out}(\bs{G}')$ est le groupe des $\omega\in \Omega$ tels que $\omega(\mathfrak{X}_{k})=\mathfrak{X}_{k}$ pour $k=1,\ldots,d-1$ et $\omega \sigma_{G'}\omega^{-1}=\sigma_{G'}$ pour tout $\sigma\in \Gamma_{F}$.
\end{enumerate}
%

\subsection{\'Equivalence presque partout, cas particulier}\label{cas particulier}
On suppose dans ce paragraphe et le suivant que $F$ est un corps de nombres. 
Soit $\bs{G}'=(G'\!,\ES{G}'\!,s)$ une donn\'ee endoscopique de $G$. Rappelons que pour chaque place $v$ de $F$, 
on en d\'eduit une donn\'ee endoscopique $\bs{G}'=(G'_{v},\ES{G}'_{v},s)$ de $G_{v}=G\times_F F_v$, cf. \ref{le rŽsultat}. 
En particulier, $\ES{G}'_{v}$ est le sous-groupe des $(g,\sigma)\in \ES{G}'$ tels que $\sigma\in \Gamma_{v}$.

Consid\'erons deux  donn\'ees endoscopiques $\bs{G}'_{1}=(G'_{1},\ES{G}'_{1},s_1)$ et $\bs{G}'_{2}=(G'_{2},\ES{G}'_{2},s_2)$ de $G$. On suppose:
\begin{enumerate}[leftmargin=17pt]
\item[(1)] les ŽlŽments $s_{1}$ et $s_{2}$ sont d'ordre fini;
\vskip1mm
\item[(2)] les donn\'ees $\bs{G}'_{1,v}$ et $\bs{G}'_{2,v}$ sont \'equivalentes pour presque toute place $v$.
\end{enumerate}
%

\begin{mapropo}
Sous ces hypoth\`eses, la proposition de \ref{le rŽsultat} est vraie: les donnŽes $\bs{G}'_{1}$ et $\bs{G}'_{2}$ sont \'equivalentes.
\end{mapropo}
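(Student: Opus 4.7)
My plan has three stages. First, reduce to $s_1=s_2=s$ using that any single place of local equivalence provides an element of $\hat{G}$ conjugating $s_1$ to $s_2$ (with $Z(\hat{G})=1$ since $\hat{G}=\hat{G}_{\rm AD}$). Second, apply the construction of \ref{la construction de Langlands} with a common choice of Borel $\hat{B}'$ of $\hat{G}'=Z_{\hat{G}}(s)^\circ$: the sets $\mathfrak{X}$ and $\mathfrak{X}_k$ depend only on $(s,\hat{B}')$ and so coincide for both data, and a single simultaneous $W$-conjugation reduces to $\mathfrak{X}\in\{\Delta,\Delta_{\rm a}\}$. The case $\mathfrak{X}=\Delta$ is then immediate from \ref{la construction de Langlands}.(3), which gives $\ES{G}'_1=\ES{G}'_2=\hat{G}'\rtimes\Gamma_F$.

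The substantial work lies in the case $\mathfrak{X}=\Delta_{\rm a}$. By \ref{�quivalences de donn�es endoscopiques}.(2), both the global equivalence and each local equivalence at a place $v$ are governed by the existence of $\omega$ in the finite group $\Omega_0=\{\omega\in\Omega : \omega(\mathfrak{X}_k)=\mathfrak{X}_k,\; k=1,\dots,d-1\}$ satisfying $\omega\,\sigma_{G'_1}\,\omega^{-1}=\sigma_{G'_2}$ for every $\sigma$ in the relevant Galois group. Writing $\sigma_{G'_i}=w_{G'_i}(\sigma)\sigma_G$ and using that $\Omega$ is abelian and normal in ${\rm Aut}(\ES{D}_{\rm a})$, a direct computation shows that $c(\sigma):=w_{G'_2}(\sigma)w_{G'_1}(\sigma)^{-1}$ belongs to $Z^1(\Gamma_F,\Omega)$ for the natural action $\sigma\mapsto\sigma_G$, and that the equation for $\omega$ translates to $c(\sigma)=\omega\cdot\sigma_G(\omega)^{-1}$. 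Thus global equivalence amounts to $c$ being an $\Omega_0$-coboundary, while the local hypothesis says that $c|_{\Gamma_{F_v}}$ is an $\Omega_0$-coboundary for almost every place $v$.

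Both $w_{G'_i}$ factor through a common finite Galois quotient $\Gamma_{K/F}$, so the question is one about $1$-cocycles of the finite group $\Gamma_{K/F}$ with values in the small finite abelian module $\Omega$. By Chebotarev density, every cyclic subgroup of $\Gamma_{K/F}$ is the image of some decomposition group $\Gamma_{F_v}$ with $v$ unramified in $K$, and the local hypothesis therefore implies that $c|_C$ is an $\Omega_0$-coboundary for every cyclic subgroup $C\subset\Gamma_{K/F}$. The main obstacle is the passage from this pointwise statement to \emph{one} global $\omega\in\Omega_0$: a finite group can be covered by a finite family of proper subgroups, so abstract nonsense does not suffice. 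I expect to settle this by exploiting the explicit small structure of $\Omega$ (isomorphic to $Z(G)$, hence cyclic in all types except $D_{2m}$ where it is $(\Bbb Z/2)^2$) together with the supplementary constraint $\omega(\mathfrak{X}_k)=\mathfrak{X}_k$, either by a Hasse-principle-style argument for $H^1(\Gamma_{K/F},\Omega)$ refined by the $\Omega_0$-trivializations or, if necessary, by a case-by-case inspection keyed on the Dynkin type of $G$.
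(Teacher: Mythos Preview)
Your plan matches the paper's strategy: reduce to $s_1=s_2=s$, apply Langlands' construction with a common $\hat{B}'$, dispose of $\mathfrak{X}=\Delta$ via \ref{la construction de Langlands}.(3), and for $\mathfrak{X}=\Delta_{\rm a}$ use Chebotarev to produce, for every $\sigma$, an $\omega\in\Omega_0$ (the paper writes $\Omega^\sharp$) with $\omega\sigma_{G'_1}\omega^{-1}=\sigma_{G'_2}$. Your cocycle reformulation $c(\sigma)=w_{G'_2}(\sigma)w_{G'_1}(\sigma)^{-1}$ is correct and equivalent.

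The one step you should make explicit, and which organises the paper's endgame, is that the pointwise conjugacy already forces $w_{G'_1}(\sigma)=w_{G'_2}(\sigma)$ for all $\sigma\in\Gamma_E$ (there $\sigma_G=1$ and $\Omega$ is abelian). Hence your cocycle $c$ factors through $\Gamma_{E/F}$, not merely $\Gamma_{K/F}$. Since $\Gamma_{E/F}$ embeds in ${\rm Aut}(\ES{D})$, it is cyclic except in the trialitarian $D_4$ case where it is $\mathfrak{S}_3$. When $\Gamma_{E/F}$ is cyclic the paper fixes $\omega\in\Omega^\sharp$ working for a generator $\gamma$; then the two homomorphisms $\sigma\mapsto\sigma_{G'_2}$ and $\sigma\mapsto\omega\sigma_{G'_1}\omega^{-1}$ agree on $\Gamma_E$ and on $\gamma$, hence on all of $\Gamma_F$. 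For $D_4$ with $\Gamma_{E/F}\simeq\mathfrak{S}_3$, the paper first applies the cyclic argument over the quadratic subfield $L$, then shows by a short computation that the obstruction $\omega_1=\omega^{-1}\sigma_{G'_2}^{-1}\omega\sigma_{G'_1}\in\Omega$ commutes with an order-$3$ element $\gamma_G$; since $\omega_1(\alpha_0)\in\{\alpha_0,\alpha_1,\alpha_3,\alpha_4\}$ must be $\gamma_G$-fixed, one gets $\omega_1(\alpha_0)=\alpha_0$, whence $\omega_1=1$ by \ref{hypoth�ses et d�finitions}.(4). Your $H^1$ heuristic is consistent with this (indeed $H^1(\mathfrak{S}_3,\Omega)=0$ here), but note that a bare vanishing of $H^1(\Gamma_{K/F},\Omega)$ does not by itself guarantee the cobounding element lies in $\Omega_0$; the paper's argument sidesteps this by constructing $\omega\in\Omega^\sharp$ directly from the local data at each stage.
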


\begin{proof}
L'\'equivalence en au moins une place entra\^{\i}ne que $s_{1}$ et $s_{2}$ sont conjugu\'es. Quitte \`a remplacer l'une de nos donn\'ees par une donn\'ee \'equivalente, on peut supposer $s_{1}=s_{2}$ et on note simplement $s$ cet \'el\'ement. On applique les constructions des paragraphes pr\'ec\'edents, on obtient en particulier un ensemble $\mathfrak{X}$ commun qui est \'egal soit \`a $\Delta$ soit \`a $\Delta_{\rm a}$. Si $\mathfrak{X}=\Delta$, l'assertion \ref{Žquivalences de donnŽes endoscopiques}.(1) entra\^{\i}ne que $\bs{G}'_{1}$ et $\bs{G}'_{2}$ sont \'equivalentes. Supposons $\mathfrak{X}=\Delta_{\rm a}$. Notons $\Omega^{\sharp}$ le sous-groupe des $\omega\in \Omega$ tels que $\omega(\mathfrak{X}_{k})=\mathfrak{X}_{k}$ pour tout $k=1,\ldots,d-1$. Montrons que
\begin{enumerate}[leftmargin=17pt]
\item[(3)]pour tout $\sigma\in \Gamma_{F}$, il existe $\omega\in \Omega^{\sharp}$ tel que $\omega \sigma_{G'_{1}}\omega^{-1}=\sigma_{G'_{2}}$.
\end{enumerate} 
D'apr\`es le th\'eor\`eme de Tchebotarev et l'hypoth\`ese (2), il existe une place $v$ et un \'el\'ement $\gamma\in \Gamma_{F}$ tels que les donnŽes 
$\bs{G}'_{1,v}$ et $\bs{G}'_{2,v}$ soient \'equivalentes et $\gamma\sigma\gamma^{-1}\in \Gamma_{F_{v}}$. En appliquant \ref{Žquivalences de donnŽes endoscopiques}.(2) aux donn\'ees locales, il existe $\omega'\in \Omega^{\sharp}$ tel que
$$
\omega'\gamma_{G'_{1}}\sigma_{G'_{1}}\gamma_{G'_{1}}^{-1}{\omega'}^{-1}=\gamma_{G'_{2}}\sigma_{G'_{2}}\gamma_{G'_{2}}^{-1}.
$$
Posons $\omega=\gamma_{G'_{2}}^{-1}\omega' \gamma_{G'_{1}}=\gamma_{G}^{-1}\omega_{G'_{2}}(\gamma)^{-1}\omega'\omega_{G'_{1}}(\gamma)\gamma_{G}$. Puisque $\Omega$ est distingu\'e dans ${\rm Aut}(\ES{D}_{\rm a})$, on a $\omega\in \Omega$. Puisque $\omega'$ et $\gamma_{G'_{2}}$ conservent tous deux les ensembles $\mathfrak{X}_{k}$, $\omega$ les conserve aussi, donc $\omega\in \Omega^{\sharp}$. Enfin, on a bien $\omega \sigma_{G'_{1}}\omega^{-1}=\sigma_{G'_{2}}$. Cela d\'emontre (3).

On a
\begin{enumerate}[leftmargin=17pt]
\item[(4)]$w_{G'_{1}}(\sigma)=w_{G'_{2}}(\sigma)$ pour tout $\sigma\in \Gamma_{E}$.
\end{enumerate}
En effet, pour $\sigma\in \Gamma_{E}$, on a simplement $\sigma_{G'_{i}}=w_{G'_{i}}(\sigma)$ pour $i=1,\,2$. Ces \'el\'ements $w_{G'_{i}}(\sigma)$ appartiennent \`a $\Omega$ et sont conjugu\'es par un \'el\'ement de $\Omega$ d'apr\`es (3). Puisque $\Omega$ est commutatif, ils sont \'egaux.

Si $E=F$, l'assertion (4) entra\^{\i}ne que $\bs{G}'_{1}=\bs{G}'_{2}$. Supposons maintenant que $\Gamma_{E/F}$ soit cyclique et fixons un \'el\'ement $\gamma\in \Gamma_{F}$ dont l'image dans $\Gamma_{E/F}$ soit un g\'en\'erateur de ce groupe. Appliquons (3) et fixons $\omega\in \Omega^{\sharp}$ tel que $\omega\gamma_{G'_{1}}\omega^{-1}=\gamma_{G'_{2}}$. Consid\'erons les deux homomorphismes $\sigma\mapsto \sigma_{G'_{2}}$ et $\sigma\mapsto \omega\sigma_{G'_{1}}\omega^{-1}$ de $\Gamma_{F}$ dans ${\rm Aut}(\ES{D}_{\rm a})$. L'ensemble des $\sigma\in \Gamma_{F}$ en lesquels ils co\"{\i}ncident est un sous-groupe de $\Gamma_{F}$. Ce sous-groupe contient $\Gamma_{E}$ d'apr\`es (4) et parce que $\Omega$ est commutatif. Il contient aussi $\gamma$ par le choix de $\omega$. Mais le sous-groupe engendr\'e par $\Gamma_{E}$ et $\gamma$ est $\Gamma_{F}$ tout entier. Donc $\sigma_{G'_{2}} = \omega\sigma_{G'_{1}}\omega^{-1}$  pour tout $\sigma\in \Gamma_{F}$. D'apr\`es \ref{Žquivalences de donnŽes endoscopiques}.(2), nos deux donn\'ees endoscopiques sont donc \'equivalentes. 

Remarquons que l'application $\sigma\mapsto \sigma_{G}$ se quotiente en une injection de $\Gamma_{E/F}$ dans ${\rm Aut}(\ES{D})$. Donc l'hypoth\`ese que $\Gamma_{E/F}$ est cyclique est  v\'erifi\'ee sauf dans le cas o\`u $G$ est de type $D_{4}$ et o\`u $\sigma\mapsto \sigma_{G}$ se quotiente en  un isomorphisme de $\Gamma_{E/F}$ sur ${\rm Aut}(\ES{D})=\mathfrak{S}_{3}$. Supposons ces hypoth\`eses v\'erifi\'ees. Notons $L$ l'extension quadratique de $F$ contenue dans $E$ telle que l'image de $\Gamma_{L}$ dans $\mathfrak{S}_{3}$ soit le sous-groupe distingu\'e d'ordre $3$ de ce groupe. On fixe un \'el\'ement $\gamma\in \Gamma_{L}$ dont l'image dans $\Gamma_{E/L}$ engendre ce groupe. Appliquons (3) et fixons $\omega\in \Omega^{\sharp}$ tel que $\omega\gamma_{G'_{1}}\omega^{-1}=\gamma_{G'_{2}}$. Le m\^eme raisonnement que ci-dessus montre que les homomorphismes $\sigma\mapsto \sigma_{G'_{2}}$ et $\sigma\mapsto \omega\sigma_{G'_{1}}\omega^{-1}$ co\"{\i}ncident sur $\Gamma_{L}$. Soit $\sigma\in \Gamma_{F}$. On a $\sigma\gamma\sigma^{-1}\in \Gamma_{L}$, donc, en appliquant ce que l'on vient de d\'emontrer \`a cet \'el\'ement et \`a $\gamma$, on a les deux \'egalit\'es
$$
\sigma_{G'_{2}}\gamma_{G'_{2}}\sigma_{G'_{2}}^{-1}=\omega\sigma_{G'_{1}}\gamma_{G'_{1}}\sigma_{G'_{1}}^{-1}\omega^{-1},\quad\gamma_{G'_{2}}=\omega\gamma_{G'_{1}}\omega^{-1}.
$$
Posons
$$
\omega_{1}=\omega^{-1}\sigma_{G'_{2}}^{-1}\omega\sigma_{G'_{1}}=\omega^{-1}\sigma_{G}^{-1}w_{G'_{2}}(\sigma)^{-1}\omega w_{G'_{1}}(\sigma)\sigma_{G}.
$$
C'est un \'el\'ement de $\Omega$ puisque ce groupe est distingu\'e dans ${\rm Aut}(\ES{D}_{\rm a})$. Les \'egalit\'es pr\'ec\'edentes entra\^{\i}nent $\omega_{1}\gamma_{G'_{1}}\omega_{1}^{-1}=\gamma_{G'_{1}}$, c'est-\`a-dire $\omega_{1}w_{G'_{1}}(\gamma)\gamma_{G} =w_{G'_{1}}(\gamma)\gamma_{G}\omega_{1}$.  Parce que $\Omega$ est commutatif, on peut supprimer les termes $w_{G'_{1}}(\gamma)$: on a 
\begin{enumerate}[leftmargin=17pt] 
\item[(5)] $\omega_{1}\gamma_{G} =\gamma_{G}\omega_{1}$.
\end{enumerate} 
Notons $(\alpha_{i})_{i=1,\ldots,4}$ les \'el\'ements de $\Delta$, $\alpha_{2}$ \'etant la racine centrale, li\'ee aux trois autres. Les racines $\alpha\in \Delta_{\rm a}$ telles que $d(\alpha)=1$ sont les $\alpha_{i}$ pour $i=0,\,1,\,3,\,4$. Donc $\omega_{1}(\alpha_{0})$ est l'une de ces racines d'apr\`es \ref{hypothses et dŽfinitions}.(4).   Par hypoth\`ese, $\gamma_{G}$ est un automorphisme d'ordre $3$. Il fixe $\alpha_{2}$ et $\alpha_{0}$ et permute cycliquement $\alpha_{1},\alpha_{3},\alpha_{4}$. L'\'egalit\'e (5) entra\^{\i}ne que $\omega_{1}(\alpha_{0})$ est fixe par $\gamma_{G}$ et ne peut donc \^etre que $\alpha_{0}$. L'assertion \ref{hypothses et dŽfinitions}.(4) entra\^{\i}ne alors $\omega_{1}=1$. En revenant \`a la d\'efinition de $\omega_{1}$, cela signifie que $\sigma_{G'_{2}}=\omega\sigma_{G'_{1}}\omega^{-1}$.  Cela \'etant vrai  pour tout $\sigma\in \Gamma_{F}$, nos deux donn\'ees endoscopiques sont \'equivalentes. Cela ach\`eve la preuve.
 \end{proof}

\subsection{Equivalence presque partout, cas g\'en\'eral}\label{cas gŽnŽral}Dans ce paragraphe, on prouve la proposition de \ref{le rŽsultat} dans le cas gŽnŽral. 

Consid\'erons deux  donn\'ees endoscopiques $\bs{G}'_{1}=(G'_{1},\ES{G}'_{1},s_1)$ et $\bs{G}'_{2}=(G'_{2},\ES{G}'_{2},s_2)$ de $G$. 
On suppose que les donn\'ees $\bs{G}'_{1,v}$ et $\bs{G}'_{2,v}$ sont \'equivalentes pour presque toute place $v$.
On doit montrer que les donnŽes $\bs{G}'_{1}$ et $\bs{G}'_{2}$ sont \'equivalentes. 

Encore une fois, l'\'equivalence en au moins une place entra\^{\i}ne que $s_{1}$ et $s_{2}$ sont conjugu\'es. Quitte \`a remplacer l'une de nos donn\'ees par une donn\'ee \'equivalente, on peut supposer $s_{1}=s_{2}$ et on note simplement $s$ cet \'el\'ement. On va utiliser la construction de Langlands, cf. \cite[pp.~704-705]{L}. Notons $\mu_{\infty}$ le groupe des racines de l'unit\'e dans ${\mathbb C}^{\times}$. Le groupe ${\mathbb C}^{\times}/\mu_{\infty}$ est  sans torsion. Notons $p:{\mathbb C}^{\times}\to {\mathbb C}^{\times}/\mu_{\infty}$ la projection naturelle et  $R$ le sous-groupe engendr\'e par l'ensemble $\{p(\alpha(s));\alpha\in \Sigma\}$. C'est un groupe ab\'elien de type fini et sans torsion, donc il est isomorphe \`a ${\mathbb Z}^n$ pour un certain $n\in {\mathbb N}$. Fixons-en une base $(r_{i})_{i=1,\ldots,n}$. Un \'el\'ement $r\in R$ s'\'ecrit $r=\prod_{i=1,\ldots,n}r_{\! i}^{n_{i}(r)}$ avec $n_{i}(r)\in {\mathbb Z}$. Notons $R^+$, resp. $R^-$, le sous-ensemble des $r\in R$ tels que, ou bien $r=1$, ou bien, pour le plus petit entier $i$ tel que $n_{i}(r)\not=0$, on a $n_{i}(r)>0$, resp. $n_{i}(r)<0$. Il est clair que $R^+$ et $R^-$ sont stables par multiplication, que $R^-$ est l'ensemble des inverses d'\'el\'ements de $R^+$ et que $R^+\cap R^-=\{1\}$. Notons $\Sigma^P$, resp. $\Sigma^M$, l'ensemble des $\alpha\in \Sigma$ tels que $p(\alpha(s))\in R^+$, resp. $p(\alpha(s))=1$. Les propri\'et\'es pr\'ec\'edentes entra\^{\i}nent qu'il existe un sous-groupe parabolique $\hat{P}$ de $\hat{G}$ et une composante de Levi $\hat{M}$ de $\hat{P}$ tels que $\hat{T}\subset \hat{M}$ et que $\Sigma^P$, resp. $\Sigma^M$, soit l'ensemble des racines de $\hat{T}$ dans $\hat{P}$, resp. $\hat{M}$. Conjuguer nos donn\'ees par un m\^eme \'el\'ement de $N_{\hat{G}}(\hat{T})$ conjugue notre couple $(\hat{P},\hat{M})$ par le m\^eme \'el\'ement. On peut donc supposer $\hat{B}\subset \hat{P}$. Il r\'esulte de la construction que
\begin{enumerate}[leftmargin=17pt] 
\item[(1)]tout automorphisme de $\hat{G}$ qui conserve $\hat{T}$ et fixe $s$ conserve aussi la paire $(\hat{P},\hat{M})$.
\end{enumerate}

Notons $\Delta^M=\Delta\cap \Sigma^M$  et $\Delta_{M}=\Delta\smallsetminus \Delta^M$. L'ensemble $\Delta^M$ est un ensemble de racines simples pour $\hat{M}$. Introduisons la relation d'\'equivalence dans $\Delta_{M}$ d\'efinie par
$$
\alpha \equiv \beta \Leftrightarrow p(\alpha(s))=p(\beta(s)).
$$ Notons  $(\Delta_{i})_{i=1,\ldots,m}$ l'ensemble des classes d'\'equivalence. 
Pour $i=1,\ldots,m$, fixons une racine $\delta_{i}\in \Delta_{i}$. Fixons un entier $b\in {\mathbb N}_{>0}$ tel que:
 \begin{itemize}
 \item $\alpha(s)^b=1$ pour tout $\alpha\in \Sigma^M$;
 \item $\alpha(s)^b=\beta(s)^{b}$ pour tous $\alpha,\beta\in \Sigma$ tels que $p(\alpha(s))=p(\beta(s))$.
 \end{itemize}
Posons $c=\sum_{i=1,\ldots,m}\sum_{\alpha\in \Delta_{i}}id(\alpha)$ et $d=3cb$. 
D\'efinissons un \'el\'ement $t\in T$ par les \'egalit\'es:
\begin{itemize}
\item $\alpha(t)=\alpha(s)$ pour tout $\alpha\in \Delta^M$;
\item pour $i=1,\ldots,m$ et pour tout $\alpha\in \Delta_{i}$, $\alpha(t)=\zeta_{d}^{i}\alpha(s)\alpha(\delta_{i})^{-1}$.
\end{itemize}

Montrons que 
\begin{enumerate}[leftmargin=17pt] 
\item[(2)]tout automorphisme de $\hat{G}$ qui conserve $\hat{T}$ et fixe $t$ conserve aussi la paire $(\hat{P},\hat{M})$.
\end{enumerate}
Pour tout $\alpha\in \Sigma$, on a $\alpha(t)=\zeta_{d}^{m(\alpha)}$ pour un \'el\'ement bien d\'efini $m(\alpha)\in {\mathbb Z}/d{\mathbb Z}$. Introduisons les ensembles $S^M$,  resp. $S^{U}$, $S^{\bar{U}}$, des racines $\alpha\in \Sigma$ qui v\'erifient les conditions $m(\alpha)\in 3c{\mathbb Z}/d{\mathbb Z}$, resp. $m(\alpha)\in( \{1,\ldots ,c\}+3c{\mathbb Z} )/d{\mathbb Z}$, resp. $m(\alpha)\in( \{-1,\ldots ,-c\}+3c{\mathbb Z} )/d{\mathbb Z}$. Il r\'esulte des d\'efinitions que l'on a les inclusions
$\Sigma^M\subset S^M$, $(\Sigma^P\smallsetminus \Sigma^M)\subset S^{U}$ et $\{-\alpha; \alpha\in \Sigma^P-\Sigma^M\}\subset S^{\bar{U}}$. Il r\'esulte aussi des d\'efinitions que les ensembles $S^M$, $S^{U}$ et $S^{\bar{U}}$ sont disjoints. Les inclusions pr\'ec\'edentes sont des \'egalit\'es. Par d\'efinition de ces ensembles, $S^M$, $S^{U}$ et $S^{\bar{U}}$  sont conserv\'es par tout automorphisme de $\hat{G}$ qui conserve $\hat{T}$ et qui fixe $t$. Il en est donc de m\^eme de $\Sigma^P$ et $\Sigma^M$, donc de $\hat{P}$ et $\hat{M}$. Cela prouve (2). 

Montrons que
\begin{enumerate}[leftmargin=17pt] 
 \item[(3)]tout automorphisme de $\hat{G}$ qui conserve $\hat{T}$ fixe $s$ si et seulement s'il fixe $t$.
 \end{enumerate}
On consid\`ere un automorphisme $u$ de $\hat{G}$ qui conserve $\hat{T}$ et qui fixe l'un des \'el\'ements $s$ ou $t$. On veut prouver qu'il fixe l'autre. D'apr\`es (1) et (2), $u$ conserve la paire $(\hat{P},\hat{M})$. On sait que l'on peut \'ecrire $u= {\rm Int}_n \circ v$ pour un \'el\'ement $n\in N_{\hat{G}}(\hat{T})$ et un automorphisme $v$ de $\hat{G}$ qui conserve la paire $(\hat{B},\hat{T})$. Alors la paire $(v(\hat{P}),v(\hat{M}))$ est conjugu\'ee par $n^{-1}$ \`a $(\hat{P},\hat{M})$. Les deux paires $(\hat{P},\hat{M})$ et $(v(\hat{P}),v(\hat{M}))$ \'etant standard, il en r\'esulte qu'elles sont \'egales. Donc $v$ et ${\rm Int}_n$ conservent la paire $(\hat{P},\hat{M})$. Il en r\'esulte que $n\in N_{\hat{M}}(\hat{T})$. On note $w$ l'image de $n$ dans le groupe de Weyl $W^M$ de $\hat{M}$ relatif \`a $\hat{T}$.  Puisque $v$ conserve $(\hat{P},\hat{M})$,  $v$ conserve $\Delta^M$ et $\Delta_{M}$. 

Montrons que
\begin{enumerate}[leftmargin=17pt] 
\item[(4)] pour $i=1,\ldots ,m$, $v$ conserve $\Delta_{i}$.
\end{enumerate}
Notons $y$ l'\'el\'ement $s$ ou $t$ que l'on suppose conserv\'e par $u$. On a $wv(y)=y$. Soit $\alpha\in \Delta_{i}$, et posons $\beta=v(\alpha)$. On a $\beta(y)=v(\alpha)(wv(y))=w'(\alpha)(y)$, o\`u $w'=v^{-1}w^{-1}v$. Cet \'el\'ement $w'$ appartient encore \`a $W^M$. Il est connu que $w'(\alpha)$ s'\'ecrit $\alpha+\sum_{\alpha'\in \Delta^M}c(\alpha')\alpha'$, avec des coefficients $c(\alpha')\in {\mathbb Z}$. Par d\'efinition de $t$ et $y$, on a $\alpha'(y)=\alpha'(s)$ pour tout $\alpha'\in \Delta^M$ et ce terme est une racine de l'unit\'e d'ordre au plus $b$. Il en r\'esulte que $\beta(y)^b\alpha(y)^{-b}=1$. Si $y=s$, alors $p(\alpha(s))=p(\beta(s))$ donc $\beta\in \Delta_{i}$ par d\'efinition des classes d'\'equivalence $\Delta_{i}$. Si $y=t$, on a $m(\beta)\in (m(\alpha)+3c{\mathbb Z})/d{\mathbb Z}=(i+3c{\mathbb Z})/d{\mathbb Z}$ et l'on voit que cette relation caract\'erise les \'el\'ements de $\Delta_{i}$, donc encore $\beta\in \Delta_{i}$. Cela prouve (4).
 
Posons $x=s/t$. Pour achever la preuve de (3), il suffit de prouver que $w(x)=v(x)=x$. On a $\alpha(x)=1$ pour tout $\alpha\in \Delta^M$, ce qui implique que $x$ appartient au centre de $\hat{M}$ donc est fix\'e par $w\in W^M$. Pour $i=1,\ldots,m$, $\alpha(x)$ est constant quand $\alpha$ parcourt $\Delta_{i}$, cela par d\'efinition de $t$. Avec (4), on voit que $v(\alpha)(x)=\alpha(x)$ pour tout $\alpha\in \Delta$, donc $v(x)=x$. Cela ach\`eve de prouver (3).

Posons $\bs{G}''_{1}=(G'_{1},\ES{G}'_{1},t)$ et $\bs{G}''_{2}=(G'_{2},\ES{G}'_{2},t)$. Il r\'esulte facilement de (3) que ces triplets sont des donn\'ees endoscopiques de $G$. Il en r\'esulte tout aussi facilement que les donn\'ees $\bs{G}'_{1}$ et $\bs{G}'_{2}$ sont \'equivalentes si et seulement si les donn\'ees $\bs{G}''_{1}$ et $\bs{G}''_{2}$ le sont. La m\^eme chose vaut pour les donn\'ees localis\'ees en une place $v$. Alors l'assertion de la proposition pour $\bs{G}'_{1}$ et $\bs{G}'_{2}$ \'equivaut \`a la m\^eme assertion pour la paire $\bs{G}''_{1}$ et $\bs{G}''_{2}$. Mais on peut appliquer \`a cette derni\`ere paire la proposition de \ref{cas particulier}. Cela ach\`eve la preuve de la proposition de \ref{le rŽsultat} dans le cas o $\hat{G}=\hat{G}_{\rm AD}$ est simple (pour la forme \og galoisienne\fg des $L$-groupes). Cette preuve s'adapte aisŽment ˆ la forme \og groupes de Weil\fg des $L$-groupes, ce qui d'aprs \ref{rŽduction} 
prouve 
la proposition de \ref{le rŽsultat} dans le cas gŽnŽral.  

 \section{Description des donn\'ees endoscopiques elliptiques}
 
\subsection{Construction de donn\'ees endoscopiques elliptiques}\label{construction}
Les hypoth\`eses sont comme en \ref{hypothses et dŽfinitions}.
Notons $\underline{\ES{E}}(G)$ l'ensemble des couples $( \omega_{G'}, \ES{O})$ o\`u: 
\begin{itemize}
\item $\omega_{G'}:\Gamma_{F}\to \Omega$ est une application telle que l'application $\sigma\mapsto \sigma_{G'}\buildrel {\rm d\acute{e}f}\over{=}\omega_{G'}(\sigma)\sigma_{G}$ soit un homomorphisme de $\Gamma_{F}$ dans ${\rm Aut}(\ES{D}_{\rm a})$;
\item $\ES{O}$ est un sous-ensemble non vide de $\Delta_{\rm a}$ qui est conserv\'e par l'action $\sigma\mapsto \sigma_{G'}$ et qui forme une unique orbite pour cette action.
\end{itemize}
Disons que deux tels couples $(\omega_{G'_{1}},\ES{O}_{1})$ et $(\omega_{G'_{2}},\ES{O}_{2})$ sont \'equivalents si et seulement s'il existe $\omega\in \Omega$ tel que $\ES{O}_{2}=\omega(\ES{O}_{1})$ et $\sigma_{G'_{2}}=\omega\sigma_{G'_{1}}\omega^{-1}$ pour tout $\sigma\in \Gamma_{F}$. Notons $\underline{E}(G)$ l'ensemble des classes d'\'equivalence dans $\underline{\ES{E}}(G)$. 
Nous allons associer \`a tout \'el\'ement de $\underline{\ES{E}}(G)$ une donn\'ee endoscopique elliptique de $G$. 

ConsidŽrons un couple $(\omega_{G'},\ES{O})\in \underline{\ES{E}}(G)$. Posons
$$
d=\sum_{\alpha\in \ES{O}}d(\alpha).
$$
Soit $s$ l'unique \'el\'ement de $\hat{T}$ tel que $\alpha(s)=1$ pour tout $\alpha\in \Delta\smallsetminus (\ES{O}\cap \Delta)$ et $\alpha(s)=\zeta_{d}$ pour tout 
$\alpha\in \ES{O}\cap \Delta$. La relation \ref{hypothses et dŽfinitions}.(1) implique que ces \'egalit\'es s'\'etendent \`a $\Delta_{\rm a}$, c'est-\`a-dire $\alpha(s)=1$ pour tout 
$\alpha\in \Delta_{\rm a}\smallsetminus \ES{O}$ et $\alpha(s)=\zeta_{d}$ pour tout $\alpha\in \ES{O}$. Puisque $\ES{O}$ est stable par l'action galoisienne $\sigma\mapsto \sigma_{G'}$, le point $s$ est fixe par cette action. 
Posons $\hat{G'}=Z_{\hat{G}}(s)^\circ$. Ce groupe contient $\hat{T}$. Notons $\ES{G}'$ le sous-ensemble de $^LG$ form\'e des \'el\'ements $(g\tilde{\omega}_{G'}(\sigma),\sigma)$ pour $g\in \hat{G'}$ et $\sigma\in \Gamma_{F}$, o\`u $\tilde{\omega}_{G'}(\sigma)$ est un repr\'esentant quelconque de $\omega_{G'}(\sigma)$ dans $N_{\hat{G}}(\hat{T})$. L'ensemble $\ES{G}'$ un  groupe qui normalise $\hat{G}'$. On en d\'eduit de la fa\c{c}on habituelle une $L$-action de $\Gamma_{F}$ sur $\hat{G}'$ et on introduit le groupe r\'eductif connexe $G'$ d\'efini et quasi-d\'eploy\'e sur $F$ dont $\hat{G}'$, muni de cette action galoisienne, est le groupe dual. On v\'erifie que le triplet $\bs{G}'=(G'\!,\ES{G}'\!,s)$ est une donn\'ee endoscopique de $G$.  

Puisque $\ES{O}$ est non vide, les \'el\'ements de $\Delta_{\rm a}\smallsetminus \ES{O}$ sont lin\'eairement ind\'ependants. Notons $\Sigma_{0}$ l'ensemble des \'el\'ements de $\Sigma$ qui sont combinaisons lin\'eaires des \'el\'ements de $\Delta_{\rm a}\smallsetminus \ES{O}$ \`a coefficients entiers relatifs. Notons $\Sigma_{0}^+$, resp. $\Sigma_{0}^-$, le sous-ensemble des \'el\'ements de $\Sigma_{0}$ pour lesquels ces coefficients sont tous positifs ou nuls, resp. n\'egatifs ou nuls. \'Evidemment, $\Sigma_{0}^-=-\Sigma_{0}^+$. Notons $\Sigma^{G'}$ l'ensemble des racines de $\hat{T}$ dans $\hat{G}'$. 
Montrons:
\begin{enumerate}[leftmargin=17pt] 
\item[(1)]on a les \'egalit\'es $\Sigma^{G'}=\Sigma_{0}=\Sigma_{0}^+\cup \Sigma_{0}^-$.
\end{enumerate}
L'ensemble $\Sigma^{G'}$ est celui des $\alpha\in \Sigma$ tels que $\alpha(s)=1$. L'inclusion $\Sigma_{0}\subset \Sigma^{G'}$ est \'evidente. Soit $\beta\in \Sigma^{G'}$. Quitte \`a remplacer $\beta$ par $-\beta$, on suppose $\beta\in \Sigma^+$. \'Ecrivons $\beta=\sum_{\alpha\in \Delta}m(\alpha)\alpha$, avec des coefficients $m(\alpha)\in {\mathbb N}$. D'apr\`es \ref{hypothses et dŽfinitions}.(3),  on a $m(\alpha)\leq d(\alpha)$ pour tout $\alpha\in \Delta$. Posons $m=\sum_{\alpha\in \ES{O}\cap\Delta}m(\alpha)$. Les in\'egalit\'es pr\'ec\'edentes impliquent $0\leq m\leq d$ et m\^eme $m\leq d-1$ si $\alpha_{0}\in \ES{O}$.  L'\'egalit\'e $\beta(s)=1$ implique $\zeta_{d}^m=1$, donc $m$ est divisible par $d$. Si $\alpha_{0}\in \ES{O}$ cela force $m=0$. Si $\alpha_{0}\not\in\ES{O}$, on a seulement $m=0$ ou $m=d$. Si $m=0$, on a $m(\alpha)=0$ pour tout $\alpha\in \ES{O}\cap \Delta$, donc $\beta=\sum_{\alpha\in \Delta\smallsetminus (\ES{O}\cap \Delta)}m(\alpha)\alpha$ et $\beta$ appartient \`a $\Sigma_{0}^+$. Supposons $m=d$, ce qui impose $\alpha_{0}\not\in \ES{O}$. L'\'egalit\'e $m=d$ et les in\'egalit\'es $m(\alpha)\leq d(\alpha) $ pour tout $\alpha\in \ES{O}\cap \Delta=\ES{O}$ impliquent $m(\alpha)=d(\alpha)$ pour tout $\alpha\in \ES{O}$. Alors
\begin{eqnarray*}
\beta &=& \left(\sum_{\alpha\in \Delta}(m(\alpha)-d(\alpha))\alpha\right)+\left(\sum_{\alpha\in \ES{O}}d(\alpha)\alpha\right)\\
&=&\left(\sum_{\alpha\in \Delta\smallsetminus \ES{O}}(m(\alpha)-d(\alpha))\alpha\right)-\alpha_{0}.
\end{eqnarray*}
C'est une combinaison lin\'eaire \`a coefficients entiers n\'egatifs ou nuls d'\'el\'ements de $\Delta_{\rm a}\smallsetminus \ES{O}$. Donc $\beta\in \Sigma_{0}^-$. Cela prouve (1). 

Cette propri\'et\'e entra\^{\i}ne qu'il existe un unique sous-groupe de Borel $\hat{B}'$ contenant $\hat{T}$ de sorte que l'ensemble de racines positives dans $\hat{G}'$ associ\'e \`a ce Borel soit $\Sigma_{0}^+$. L'ensemble de racines simples $\Delta^{G'}$ est alors $\Delta_{\rm a}\smallsetminus \ES{O}$. 

Montrons que
\begin{enumerate}[leftmargin=17pt] 
\item[(2)] la donn\'ee $\bs{G}'$ est elliptique.
\end{enumerate}
Notons $X^*(\hat{T})$ le groupe des caract\`eres alg\'ebriques de $\hat{T}$ et $X^*(\hat{T})_{{\mathbb Q}}=X^*(\hat{T})\otimes_{{\mathbb Z}}{\mathbb Q}$. Pour tout ensemble fini $E\subset \Delta_{\rm a}$ stable par l'action galoisienne $\sigma\mapsto \sigma_{G'}$, notons ${\mathbb Q}[E]$ l'espace vectoriel sur ${\mathbb Q}$ de base $E$. Si $E\not=\Delta_{\rm a}$ l'espace ${\mathbb Q}[E]$ se plonge naturellement dans $X^*(\hat{T})_{{\mathbb Q}}$. On munit $X^*(\hat{T})_{{\mathbb Q}}$ et ${\mathbb Q}[E]$ de  cette action galoisienne  $\sigma\mapsto \sigma_{G'}$. Dire que la donnŽe $\bs{G}'$ est elliptique est \'equivalent \`a l'\'egalit\'e
$$
X^*(\hat{T})_{{\mathbb Q}}^{\Gamma_{F}}={\mathbb Q}[\Delta^{G'}]^{\Gamma_{F}},
$$
les exposants signifiants que l'on prend les sous-espaces de points fixes. On a une projection naturelle $p:{\mathbb Q}[\Delta_{\rm a}]\to X^*(\hat{T})_{{\mathbb Q}}$ dont le noyau est l'espace engendr\'e par la relation (1) de \ref{hypothses et dŽfinitions}, laquelle est fixe par $\Gamma_{F}$. De plus, cette projection est l'identit\'e sur ${\mathbb Q}[\Delta^{G'}]$ 
(qui est plong\'e \`a la fois dans les espaces de d\'epart et d'arriv\'ee). L'ellipticit\'e se traduit donc par l'\'egalit\'e
$$
\dim({\mathbb Q}[\Delta_{\rm a}]^{\Gamma_{F}})=1+\dim({\mathbb Q}[\Delta^{G'}]^{\Gamma_{F}}).
$$
Cela \'equivaut \`a $\dim({\mathbb Q}[\ES{O}]^{\Gamma_{F}})=1$. Mais ceci r\'esulte de l'hypoth\`ese que $\ES{O}$ est une unique orbite pour l'action de $\Gamma_{F}$.

\subsection{InjectivitŽ}\label{injectivitŽ}
Continuons avec les notations de \ref{construction}. 
Munissons $\Sigma$ de l'ordre partiel $\leq_{\hat{B}'}$ d\'efini en \ref{la construction de Langlands}: pour $\alpha,\beta\in \Sigma$, on a $\alpha\leq_{\hat{B}'} \beta$ si et seulement si $\beta-\alpha$ est combinaison lin\'eaire \`a coefficients dans ${\mathbb N}$ d'\'el\'ements de $\Delta_{\rm a}\smallsetminus \ES{O}$. Posons
$$
\mathfrak{S}=\{\alpha\in \Sigma: \alpha(s)=\zeta_{d}\}.
$$
Cet ensemble contient $\ES{O}$. 

Montrons que
\begin{enumerate}[leftmargin=17pt] 
\item[(1)] pour tout $\beta\in \mathfrak{S}$, il existe $\alpha\in \ES{O}$ tel que $\alpha\leq_{\hat{B}'}\beta$.
\end{enumerate}
Supposons d'abord $\beta\in \Sigma^+$. \'Ecrivons $\beta=\sum_{\alpha\in \Delta}m(\alpha)\alpha$. Posons $m=\sum_{\alpha\in \ES{O}\cap \Delta}m(\alpha)$. Comme plus haut, on a $0\leq m(\alpha)\leq d(\alpha)$ pour tout $\alpha\in \Delta$, d'o\`u $0\leq m\leq d$. On a $\beta(s)=\zeta_{d}^m$. Puisque $\beta(s)=\zeta_{d}$, cela implique $m\equiv 1\;({\rm mod}\,d{\mathbb Z})$, donc $m=1$. Il existe donc un unique \'el\'ement $\alpha\in \ES{O}\cap \Delta$ tel que $m(\alpha)\not=0$ et, pour cette unique racine, on a $m(\alpha)=1$. Alors $\beta=\alpha+\sum_{\alpha'\in \Delta\smallsetminus (\ES{O}\cap \Delta)}m(\alpha')\alpha'$, donc $\alpha\leq_{\hat{B}'}\beta$. Supposons maintenant $\beta\in -\Sigma^+$, \'ecrivons  $-\beta=\sum_{\alpha\in \Delta}m(\alpha)\alpha$ et d\'efinissons $m$ comme ci-dessus. On a alors $\beta(s)=\zeta_{d}^{-m}$ d'o\`u $-m\equiv 1\;({\rm mod}\,d{\mathbb Z})$. Cela impose $m=d-1$. Supposons d'abord $\alpha_{0}\not\in \ES{O}$. On a alors $m=\sum_{\alpha\in \ES{O}}m(\alpha)$ et $d=\sum_{\alpha\in \ES{O}}d(\alpha)$. Les in\'egalit\'es entre les $m(\alpha)$ et les $d(\alpha)$ impliquent qu'il existe un unique \'el\'ement $\alpha\in \ES{O}$ tel que $m(\alpha)=d(\alpha)-1$ et on a $m(\alpha')=d(\alpha')$ pour tout $\alpha'\in \ES{O}\smallsetminus \{\alpha\}$. Alors
\begin{eqnarray*}
\beta &=& \alpha_{0}+\sum_{\alpha'\in \Delta}(d(\alpha')-m(\alpha'))\alpha'\\
&=&\alpha+\alpha_{0}+\sum_{\alpha'\in \Delta\smallsetminus \ES{O}}(d(\alpha')-m(\alpha'))\alpha'.
\end{eqnarray*}
Cette \'egalit\'e montre que $\alpha\leq_{\hat{B}'}\beta$. Supposons maintenant $\alpha_{0}\in \ES{O}$. On a alors $m=\sum_{\alpha\in \ES{O}\smallsetminus \{\alpha_{0}\}}m(\alpha)$ et $d=1+\sum_{\alpha\in \ES{O}\smallsetminus \{\alpha_{0}\}}d(\alpha)$. L'\'egalit\'e $m=d-1$ force $m(\alpha)=d(\alpha)$ pour tout $\alpha\in \ES{O}\cap \Delta$. On obtient comme ci-dessus
$$\beta=\alpha_{0}+\sum_{\alpha'\in \Delta\smallsetminus (\ES{O}\cap \Delta)}(d(\alpha')-m(\alpha'))\alpha'$$
et $\alpha_{0}\leq_{\hat{B}'}\beta$. Cela d\'emontre (1).

Montrons que
\begin{enumerate}[leftmargin=17pt] 
\item[(2)] $\ES{O}$ est l'ensemble des \'el\'ements de $\mathfrak{S}$ qui sont minimaux pour l'ordre $\leq_{\hat{B}'}$.
\end{enumerate}
La relation (1) entra\^{\i}ne que l'ensemble des \'el\'ements de $\mathfrak{S}$ minimaux pour l'ordre $\leq_{\hat{B}'}$ appartiennent \`a $\ES{O}$. Soit maintenant $\alpha\in \ES{O}$ et consid\'erons un \'el\'ement $\beta\in \mathfrak{S}$ tel que $\beta\leq_{\hat{B}'}\alpha$. D'apr\`es (1), on peut fixer $\alpha'\in \ES{O}$ tel que $\alpha'\leq_{\hat{B}'}\beta$, donc $\alpha'\leq_{\hat{B}'}\alpha$. Supposons $\alpha'\not=\alpha$. Alors, par d\'efinition de l'ordre $\leq_{\hat{B}'}$, on obtient une \'egalit\'e
$$
-\alpha+\alpha'+\sum_{\alpha''\in \Delta_{\rm a}\smallsetminus \ES{O}}m(\alpha'')\alpha''=0,
$$
avec des coefficients $m(\alpha'')\in {\mathbb N}$. Or les seules relations lin\'eaires entre \'el\'ements de $\Delta_{\rm a}$ sont proportionnelles \`a  la relation (1) de \ref{hypothses et dŽfinitions} donc le coefficient de $\alpha$ et celui de $\alpha'$ doivent \^etre de m\^eme signe. Ce n'est pas le cas. Cette contradiction prouve que $\alpha'=\alpha$. Alors, les in\'egalit\'es $\alpha=\alpha'\leq_{\hat{B}'}\beta\leq_{\hat{B}'}\alpha$ entra\^{\i}nent $\beta=\alpha$, ce qui prouve que $\alpha$ est minimal. Cela d\'emontre (2).

\begin{mapropo}{L'application qui, \`a un couple $(\omega_{G'},\ES{O})\in \underline{\ES{E}}(G)$, associe la donn\'ee $\bs{G}'\!$, se quotiente en une injection de $\underline{E}(G)$ dans l'ensemble des classes d'\'equivalence de donn\'ees endoscopiques elliptiques de $G$.}
\end{mapropo}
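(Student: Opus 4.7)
The plan has two parts: check that the construction is compatible with the equivalence relation on $\underline{\ES{E}}(G)$, then prove injectivity.

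For the well-definedness part, I start from an equivalence $(\omega_{G'_{1}},\ES{O}_{1})\sim(\omega_{G'_{2}},\ES{O}_{2})$ witnessed by some $\omega\in\Omega$, and lift it to $x\in N_{\hat{G}}(\hat{T})$. Since $\ES{O}_{2}=\omega(\ES{O}_{1})$, a direct verification on characters shows $xs_{1}x^{-1}=s_{2}$. The relation $\sigma_{G'_{2}}=\omega\sigma_{G'_{1}}\omega^{-1}$ rewrites as the cocycle identity $\omega_{G'_{2}}(\sigma)=\omega\,\omega_{G'_{1}}(\sigma)\,\sigma_{G}(\omega)^{-1}$, which together with the freedom in choosing the representatives $\tilde{\omega}_{G'_{i}}(\sigma)$ (modulo $\hat{T}\subset \hat{G}'$) gives $x\ES{G}'_{1}x^{-1}=\ES{G}'_{2}$. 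Hence $\bs{G}'_{1}$ and $\bs{G}'_{2}$ are equivalent.

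For injectivity, suppose $\bs{G}'_{1}$ and $\bs{G}'_{2}$ (built from couples $(\omega_{G'_{i}},\ES{O}_{i})$) are equivalent via some $x\in \hat{G}$. Since $\hat{G}=\hat{G}_{\rm AD}$, one has literally $xs_{1}x^{-1}=s_{2}$, so the two elements have the same order $d$. Left-multiplying $x$ by an element of $\hat{G}'_{2}=Z_{\hat{G}}(s_{2})^{\circ}$ and right-multiplying by an element of $\hat{G}'_{1}$, I may assume $x\in N_{\hat{G}}(\hat{T})$ and that ${\rm Int}_{x}$ sends the canonical Borel $\hat{B}'_{1}$ (characterized by \ref{construction}.(1)) to $\hat{B}'_{2}$. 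Let $\omega\in W$ be the image of $x$; then $\omega$ sends the simple roots $\Delta^{G'_{1}}=\Delta_{\rm a}\smallsetminus\ES{O}_{1}$ to $\Delta^{G'_{2}}=\Delta_{\rm a}\smallsetminus\ES{O}_{2}$ and hence preserves the order obtained by changing $\leq_{\hat{B}'_{1}}$ to $\leq_{\hat{B}'_{2}}$.

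The key combinatorial step is to show $\omega(\ES{O}_{1})=\ES{O}_{2}$, which forces $\omega(\Delta_{\rm a})=\Delta_{\rm a}$ and thus $\omega\in\Omega$. From $xs_{1}x^{-1}=s_{2}$ I get $\omega(\mathfrak{S}_{1})=\mathfrak{S}_{2}$ where $\mathfrak{S}_{i}=\{\alpha\in\Sigma:\alpha(s_{i})=\zeta_{d}\}$; then \ref{injectivit�}.(2) identifies $\ES{O}_{i}$ with the minimal elements of $\mathfrak{S}_{i}$ for $\leq_{\hat{B}'_{i}}$, and order preservation yields $\omega(\ES{O}_{1})=\ES{O}_{2}$. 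Finally, for each $\sigma\in\Gamma_{F}$, projecting the identity $x(\hat{T}\tilde{\omega}_{G'_{1}}(\sigma),\sigma)x^{-1}\subset \ES{G}'_{2}$ into $W$, exactly as in \ref{�quivalences de donn�es endoscopiques}.(2), gives $\omega\sigma_{G'_{1}}\omega^{-1}=\sigma_{G'_{2}}$; so $(\omega_{G'_{1}},\ES{O}_{1})\sim(\omega_{G'_{2}},\ES{O}_{2})$ in $\underline{\ES{E}}(G)$.

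The main obstacle is the assertion $\omega\in\Omega$: a priori $\omega$ is only a Weyl element sending one ``affine simple system minus an orbit'' to another, and without the characterization \ref{injectivit�}.(2) of $\ES{O}_{i}$ as minimal elements of $\mathfrak{S}_{i}$ one cannot read off how $\omega$ acts on the affine root $\alpha_{0}$. Once this minimality characterization is in hand, the rest reduces to the same Weyl-group bookkeeping already used in \ref{�quivalences de donn�es endoscopiques}.
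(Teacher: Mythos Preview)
Your proposal is correct and follows essentially the same route as the paper: lift $\omega$ to $N_{\hat{G}}(\hat{T})$ for well-definedness, and for injectivity normalize $x$ so that ${\rm Int}_x$ sends $(\hat{B}'_1,\hat{T})$ to $(\hat{B}'_2,\hat{T})$, use \ref{injectivit�}.(2) to get $\omega(\ES{O}_1)=\ES{O}_2$ and hence $\omega\in\Omega$, then project into $W$ to recover $\sigma_{G'_2}=\omega\sigma_{G'_1}\omega^{-1}$. Your identification of the key obstacle (forcing $\omega\in\Omega$ via the minimality characterization of $\ES{O}_i$ in $\mathfrak{S}_i$) is exactly the crux of the paper's argument.
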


\begin{proof}Soient  $(\omega_{G'_{1}},\ES{O}_{1}),(\omega_{G'_{2}},\ES{O}_{2})\in \underline{\ES{E}}(G)$. Notons $\bs{G}'_{1}$ et $\bs{G}'_{2}$ les donn\'ees endoscopiques de $G$ associ\'ees. La proposition signifie que $(\omega_{G'_{1}},\ES{O}_{1})$ et $(\omega_{G'_{2}},\ES{O}_{2})$ sont \'equivalentes si et seulement si $\bs{G}'_{1} $ et $\bs{G}'_{2}$ le sont. Dans un sens, c'est clair: si les couples $(\omega_{G'_{1}},\ES{O}_{1})$ et $(\omega_{G'_{2}},\ES{O}_{2})$ sont conjugu\'es par un \'el\'ement $\omega\in \Omega$, il suffit de relever $\omega$ en un \'el\'ement quelconque $x$  de $N_{\hat{G}}(\hat{T})$. L'\'el\'ement $x$ est une \'equivalence entre les deux donn\'ees endoscopiques. Inversement, supposons que les donnŽes $\bs{G}'_{1}
$ et $\bs{G}'_{2}$ soient \'equivalentes. On affecte les objets relatifs \`a nos deux s\'eries de donn\'ees d'indices $1$ ou $2$. En particulier, on introduit les sous-groupes de Borel $\hat{B}'_{1}$ de $\hat{G}'_{1}$, resp. $\hat{B}'_{2}$ de $\hat{G}'_{2}$, que l'on a d\'efini avant l'assertion \ref{construction}.(2). Fixons $x\in \hat{G}$ qui r\'ealise l'\'equivalence, c'est-\`a-dire que $s_{2}=xs_{1}x^{-1}$ et $\ES{G}'_{2}=x\ES{G}'_{1}x^{-1}$. On peut multiplier $x$ \`a droite par un \'el\'ement de $\hat{G}'_{1}$ et \`a gauche par un \'el\'ement de $\hat{G}'_{2}$. On peut donc supposer que la conjugaison par $x$ envoie la paire de Borel $(\hat{B}'_{1},\hat{T})$ de $\hat{G}'_{1}$ sur la paire de Borel $(\hat{B}'_{2},\hat{T})$ de $\hat{G}'_{2}$. En particulier, $x$ normalise $\hat{T}$ et d\'efinit un \'el\'ement $\omega\in W$. Puisque, pour $i=1,\,2$, $\Delta_{\rm a}\smallsetminus \ES{O}_{i}$ est l'ensemble des racines simples dans $\Sigma^{G'_{i}}$ pour le Borel $\hat{B}'_{i}$, on a $\omega(\Delta_{\rm a}\smallsetminus \ES{O}_{1})=\Delta_{\rm a}\smallsetminus \ES{O}_{2}$. De m\^eme, $\omega$ transporte l'ordre $\leq_{\hat{B}'_1}$ en l'ordre $\leq_{\hat{B}'_2}$. Pour $i=1,2$, $d_{i}$ est l'ordre de $s_{i}$. Puisque $xs_{1}x^{-1}=s_{2}$, on a $d_{1}=d_{2}$ et on note simplement $d$ cet entier. Alors la conjugaison par $\omega$ envoie $\mathfrak{S}_{1}$ sur $\mathfrak{S}_{2}$. Puisque cette conjugaison transporte les ordres, l'assertion (2) entra\^{\i}ne qu'elle envoie $\ES{O}_{1}$ sur $\ES{O}_{2}$. Alors, elle conserve $\Delta_{\rm a}$, donc $\omega\in \Omega$. 
Soit $\sigma\in \Gamma_{F}$. Pour $i=1,\,2$,  l'\'el\'ement $ \omega_{G'_i}(\sigma)$ est obtenu en choisissant un ŽlŽment $(g'_i(\sigma),\sigma)\in \ES{G}'_i$ qui conserve 
$(\hat{B}'_i,\hat{T})$, puis en posant $\omega_{G'_i}(\sigma)=\hat{T}g'_i(\sigma)$. Puisque $x\ES{G}'_1 x^{-1}= \ES{G}'_2$, on a $x(g'_{1}(\sigma),\sigma)x^{-1}=(hg_{2}(\sigma),\sigma)$, pour un $h\in \hat{G}'$. Puisque la conjugaison par $x$ envoie $(\hat{B}'_1,\hat{T})$ sur $(\hat{B}'_2,\hat{T})$, la conjugaison par $h$ conserve $(\hat{B}'_2,\hat{T})$. Donc $h\in \hat{T}$. On en d\'eduit  $xg'_1(\sigma) \sigma_G(x)^{-1} \in \hat{T}g'_2(\sigma)$, ce qui Žquivaut ˆ $\omega \omega_{G'_1}(\sigma)\sigma_G(\omega)^{-1}= \omega_{G'_2}(\sigma)$.
 Ceci \'etant vrai pour tout $\sigma$, les couples $(\omega_{G'_{1}},\ES{O}_{1})$ et $(\omega_{G'_{2}},\ES{O}_{2})$ sont \'equivalents. \end{proof}

\subsection{Surjectivit\'e d'apr\`es  Langlands}\label{surjectivitŽ}

\begin{mapropo}
L'application qui, \`a un couple $(\omega_{G'},\ES{O})\in \underline{\ES{E}}(G)$, associe la donn\'ee $\bs{G}'\!$, se quotiente en une bijection de $\underline{E}(G)$ sur l'ensemble des classes d'\'equivalence de donn\'ees endoscopiques elliptiques de $G$.
\end{mapropo}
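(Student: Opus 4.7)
L'injectivit\'e \'etant d\'emontr\'ee par la Proposition de \ref{injectivit�}, il reste \`a \'etablir la surjectivit\'e. Je partirai d'une donn\'ee endoscopique elliptique $\bs{G}'=(G'\!,\ES{G}'\!,s)$, avec $s\in \hat{T}$ \`a \'equivalence pr\`es. Puisque $\hat{G}$ est adjoint on a $Z(\hat{G})=1$, et l'ellipticit\'e entra\^{\i}ne que $Z(\hat{G}')^{\Gamma_{F}}$ est fini. Comme $s\in Z(\hat{G}')^{\Gamma_{F}}$, l'\'el\'ement $s$ est d'ordre fini $d$. On applique alors la construction de Langlands de \ref{la construction de Langlands}: quitte \`a conjuguer $\bs{G}'$ par un rel\`evement dans $N_{\hat{G}}(\hat{T})$ d'un \'el\'ement de $W$, on peut supposer $\mathfrak{X}=\Delta$ (Cas 1) ou $\mathfrak{X}=\Delta_{\rm a}$ (Cas 2).

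Dans le Cas 1, d'apr\`es \ref{la construction de Langlands}.(3), on a $\ES{G}'=\hat{G}'\rtimes \Gamma_{F}$, et $\hat{G}'$ est un sous-groupe de Levi de $\hat{G}$ de racines simples $\mathfrak{X}_{0}\subset \Delta$. La traduction de l'ellipticit\'e \`a la mani\`ere du calcul de \ref{construction}.(2) force $\mathfrak{X}_{0}=\Delta$, donc $\hat{G}'=\hat{G}$ et $s=1$: c'est la donn\'ee triviale, image par la construction de \ref{construction} du couple $(1,\{\alpha_{0}\})\in \underline{\ES{E}}(G)$.

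Dans le Cas 2, je poserais $\omega_{G'}:=w_{G'}$ et $\ES{O}:=\Delta_{\rm a}\smallsetminus \mathfrak{X}_{0}=\bigcup_{k\geq 1}\mathfrak{X}_{k}$. Par \ref{la construction de Langlands}, $\omega_{G'}$ est \`a valeurs dans $\Omega$ et $\sigma\mapsto \sigma_{G'}$ est un homomorphisme. Chaque $\mathfrak{X}_{k}$ \'etant conserv\'e par $\sigma_{G'}$, il en va de m\^eme de $\ES{O}$; l'hypoth\`ese d'ellipticit\'e \'equivaut, via le calcul de \ref{construction}.(2) appliqu\'e en sens inverse, \`a ce que $\ES{O}$ forme une unique orbite sous $\sigma_{G'}$, et fournit ainsi $(\omega_{G'},\ES{O})\in \underline{\ES{E}}(G)$. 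La stabilit\'e de chaque $\mathfrak{X}_{k}$ sous $\sigma_{G'}$ et l'unicit\'e de l'orbite imposent qu'un unique $\mathfrak{X}_{k_{0}}$ avec $k_{0}\geq 1$ soit non vide, et il vaut $\ES{O}$.

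Il reste \`a v\'erifier que la donn\'ee $\bs{G}''$ construite \`a partir de $(\omega_{G'},\ES{O})$ par la recette de \ref{construction} est \'equivalente \`a $\bs{G}'$. L'\'egalit\'e $\Delta^{G'}=\mathfrak{X}_{0}=\Delta_{\rm a}\smallsetminus \ES{O}=\Delta^{G''}$ entra\^{\i}ne $\hat{G}'=\hat{G}''$; comparant $\mathfrak{Y}_{0}=\Sigma^{G'}$ et $\Sigma_{0}=\Sigma^{G''}$ (cf. \ref{construction}.(1)), on en d\'eduit $d=d''$ o\`u $d''=\sum_{\alpha\in \ES{O}}d(\alpha)$. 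L'obstacle principal est alors d'\'etablir $k_{0}=1$, \cad $\mathfrak{Y}_{1}\neq \emptyset$ pour toute donn\'ee elliptique non triviale; cela repose sur une analyse combinatoire fine liant la construction de Langlands \`a l'hypoth\`ese d'ellipticit\'e. Une fois ceci acquis, les \'egalit\'es $\alpha(s)=\zeta_{d}=\alpha(s'')$ pour $\alpha\in \ES{O}$ et $\alpha(s)=1=\alpha(s'')$ pour $\alpha\in \mathfrak{X}_{0}$ donnent $s=s''$, et l'identification $\ES{G}'=\ES{G}''$ r\'esulte du choix $\omega_{G'}=w_{G'}$.
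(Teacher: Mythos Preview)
Votre plan est correct jusqu'au point o\`u vous identifiez \og l'obstacle principal\fg, mais c'est pr\'ecis\'ement l\`a que se trouve tout le contenu de la d\'emonstration, et vous ne le fournissez pas. Deux remarques pr\'ecises.

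Premi\`erement, votre d\'eduction de $d=d''$ \`a partir de $\hat{G}'=\hat{G}''$ (ou de $\mathfrak{Y}_{0}=\Sigma_{0}$) n'est pas valable: deux \'el\'ements de $\hat{T}$ peuvent avoir m\^eme commutant connexe et des ordres diff\'erents (prendre par exemple $\hat{G}=\mathrm{PGL}_{3}$ et des \'el\'ements r\'eguliers d'ordres $3$ et $9$). L'\'egalit\'e $d=d''$ est un fait non trivial qui, dans l'article, est d\'emontr\'e \emph{en m\^eme temps} que $k_{0}=1$ par un argument combinatoire sur les racines: on fixe une cha\^{\i}ne de racines positives $\beta_{1},\ldots,\beta_{m}$ avec $\beta_{1}\in\Delta$, $\beta_{i+1}-\beta_{i}\in\Delta$ et $\beta_{m}=-\alpha_{0}$; la \og hauteur\fg\ $d_{i}=\sum_{\alpha\in\ES{O}\cap\Delta}d_{i}(\alpha)$ (o\`u $\beta_{i}=\sum d_{i}(\alpha)\alpha$) est croissante \`a pas~$0$ ou~$1$ et atteint au moins $d''-1$. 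Si $d<d''$, il existe $i$ avec $d_{i}=d$, donc $\beta_{i}(s)=1$, et l'\'ecriture de $\beta_{i}$ dans la base $\mathfrak{X}_{0}=\Delta_{\rm a}\smallsetminus\ES{O}$ m\`ene \`a une contradiction via la relation \ref{hypoth�ses et d�finitions}.(1). Une fois $d=d''$ acquis, la m\^eme cha\^{\i}ne fournit, pour tout $e\in\{1,\ldots,d-1\}$, une racine $\beta_{i}$ avec $\beta_{i}(s)=\zeta_{d}^{k_{0}e}$; comme $k_{0}$ est premier \`a $d$, on obtient $\mathfrak{Y}_{1}\neq\emptyset$ d'o\`u $k_{0}=1$.

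Deuxi\`emement, contrairement \`a ce que vous \'ecrivez, cet argument n'utilise \emph{pas} l'ellipticit\'e: c'est de la combinatoire pure sur le syst\`eme de racines de $\hat{G}$. L'ellipticit\'e a d\'ej\`a \'et\'e consomm\'ee pour obtenir l'unicit\'e de l'orbite $\ES{O}$ (et donc l'unicit\'e de $k_{0}$); le reste est ind\'ependant. Remarque mineure: dans votre Cas~1, $\hat{G}'$ n'est pas a priori un sous-groupe de Levi (le Borel $\hat{B}'$ n'est pas $\hat{B}\cap\hat{G}'$ en g\'en\'eral); mais l'argument d'ellipticit\'e que vous esquissez fonctionne quand m\^eme puisque $\mathfrak{X}_{0}\subset\Delta$ est lin\'eairement ind\'ependant.
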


\begin{proof}
L'injectivitŽ ayant dŽjˆ ŽtŽ Žtablie, il faut prouver la surjectivit\'e. On consid\`ere une donn\'ee endoscopique elliptique $\bs{G}'=(G'\!,\ES{G}'\!,s)$ de $G$. Parce qu'elle est elliptique, Langlands prouve (par la m\'ethode que l'on a reprise en \ref{cas gŽnŽral}) que $s$ est d'ordre fini. On effectue la construction de \ref{la construction de Langlands}, dont en reprend les notations. Montrons que
\begin{enumerate}[leftmargin=17pt] 
\item[(1)]si la donnŽe $\bs{G}'$ est \'equivalente \`a la donn\'ee principale $\bs{G}=(G,1,{^LG})$, on a $\mathfrak{X}=\Delta$; sinon, on a $\mathfrak{X}=\Delta_{\rm a}$, il existe un unique $k\in \{1,\ldots ,d-1\}$ tel que $\mathfrak{X}_{k}\not=\emptyset$ et cet unique ensemble $\mathfrak{X}_{k}$ forme  une unique orbite pour l'action $\sigma\mapsto \sigma_{G'}$. 
\end{enumerate}
Si la donnŽe $\bs{G}'$ est \'equivalente \`a $\bs{G}$, on a $s=1$ donc $d=1$ et $\mathfrak{X}=\mathfrak{X}_{0}$. Puisque ce dernier ensemble est un ensemble de racines simples pour $\hat{G}'$, ses \'el\'ements sont lin\'eairement ind\'ependants donc $\mathfrak{X}\not=\Delta_{\rm a}$, ce qui implique $\mathfrak{X}=\Delta$. Supposons maintenant que la donnŽe $\bs{G}'$ ne soit pas \'equivalente \`a $\bs{G}$. Alors $s\not=1$ et $d\geq2$. Il y a forc\'ement une racine $\alpha\in \Sigma$ pour laquelle $\alpha(s)\not=1$, donc un $k\in\{1,\ldots ,d-1\}$ tel que $\mathfrak{Y}_{k}\not=\emptyset$. Consid\'erons le plus petit $k$ pour lequel cette propri\'et\'e est v\'erifi\'ee. Alors, par d\'efinition, $\mathfrak{Z}_{k}=\mathfrak{Y}_{k}$ donc $\mathfrak{Z}_{k}\not=\emptyset$. L'ensemble $\mathfrak{X}_{k}$ des \'el\'ements minimaux de $\mathfrak{Z}_{k}$ n'est pas vide non plus.  Comme dans la preuve de \ref{construction}.(2), l'ellipticit\'e se traduit par l'\'egalit\'e $\dim(X^*(\hat{T})_{{\mathbb Q}}^{\Gamma_{F}})=\dim({\mathbb Q}[\mathfrak{X}_{0}]^{\Gamma_{F}})$. Si $\mathfrak{X}=\Delta$, on a
$$
\dim(X^*(\hat{T})_{{\mathbb Q}}^{\Gamma_{F}} )=\dim({\mathbb Q}[\mathfrak{X}_{0}]^{\Gamma_{F}})+\sum_{j=1,\ldots ,d-1}\dim({\mathbb Q}[\mathfrak{X}_{j}]^{\Gamma_{F}}).
$$
Mais puisque $\mathfrak{X}_{k}$ n'est pas vide, l'espace $ {\mathbb Q}[\mathfrak{X}_{k}]^{\Gamma_{F}}$ n'est pas nul: la somme des \'el\'ements de $\mathfrak{X}_{k}$ appartient \`a cet espace. Donc $\dim(X^*(\hat{T})_{{\mathbb Q}}^{\Gamma_{F}})>\dim({\mathbb Q}[\mathfrak{X}_{0}]^{\Gamma_{F}})$, contradiction. Donc $\mathfrak{X}=\Delta_{\rm a}$. Comme dans la preuve de \ref{construction}(2), l'ellipticit\'e se traduit alors par l'\'egalit\'e
$$
\dim({\mathbb Q}[\Delta_{\rm a}]^{\Gamma_{F}})=1+\dim({\mathbb Q}[\mathfrak{X}_{0}]^{\Gamma_{F}}).
$$ S'il y a au moins deux $j\in \{1,\ldots ,d-1\}$ tels que $\mathfrak{X}_{j}\not=\emptyset$, on voit comme ci-dessus que  $\dim({\mathbb Q}[\Delta_{\rm a}]^{\Gamma_{F}})\geq 2+\dim({\mathbb Q}[\mathfrak{X}_{0}]^{\Gamma_{F}})$, contradiction. Donc $k$ est l'unique \'el\'ement de $\{1,\ldots ,d-1\}$ tel que $\mathfrak{X}_{k}\not=\emptyset$. Pour la m\^eme raison, on a $dim({\mathbb Q}[\mathfrak{X}_{k}]^{\Gamma_{F}})=1$. Cela prouve (1).

Consid\'erons le couple $(\omega_{G},\{\alpha_{0}\})$, o\`u $\omega_{G}:\Gamma_{F}\to \Omega$ est l'application constante de valeur $1$. Ce couple appartient \`a $\underline{\ES{E}}(G)$ et on voit que la donn\'ee endoscopique qui lui est associ\'ee n'est autre que $\bs{G}$. Cela r\`egle la question pour cette donn\'ee. Supposons maintenant que la donn\'ee $\bs{G}'$ ne soit pas \'equivalente \`a $\bs{G}$. Alors $\mathfrak{X}=\Delta_{\rm a}$ et la construction de \ref{la construction de Langlands} munit $\Delta_{\rm a}$ d'une action galoisienne $\sigma\mapsto \sigma_{G'}$ qui est de la forme $\sigma_{G'}=w_{G'}(\sigma)\sigma_{G}$, o\`u $w_{G'}$ est une application de $\Gamma_{F}$ dans $\Omega$. On pose $\ES{O}=\mathfrak{X}_{k}$, o\`u $k$ est l'entier de la relation (1). Le couple $(w_{G'},\ES{O})$ appartient \`a $\underline{\ES{E}}(G)$. On lui associe une donn\'ee endoscopique $\bs{G}''=(G'',\ES{G}'',\underline{s})$ comme dans le paragraphe pr\'ec\'edent. Il r\'esulte des d\'efinitions que $\ES{G}''=\ES{G}'$. Il est moins clair que $\underline{s}=s$. En effet, les d\'efinitions entra\^{\i}nent $\alpha(s)=\alpha(\underline{s})=1$ pour $\alpha\in \Delta_{\rm a}\smallsetminus \ES{O}$, mais, pour $\alpha\in \ES{O}$, $\alpha(s)=\zeta_{d}^k$ tandis que $\alpha(\underline{s})=\zeta_{\underline{d}}$, o\`u $\underline{d}=\sum_{\beta\in \ES{O}}d(\beta)$. Pour montrer que $\underline{s}=s$ et donc pour achever la preuve de la proposition, il reste \`a prouver:
\begin{enumerate}[leftmargin=17pt] 
\item[(2)] on a les \'egalit\'es $\underline{d}=d$ et $k=1$. 
\end{enumerate}
On vient de dire que $\alpha(s)=1$ si $\alpha\in \Delta_{\rm a}\smallsetminus \ES{O}$ et $\alpha(s)=\zeta_{d}^k$ si $\alpha\in \ES{O}$. Puisque $d$ est par d\'efinition l'ordre de $s$, cela entra\^{\i}ne   que $k$ est premier \`a $d$. De plus, la relation \ref{hypothses et dŽfinitions}.(1) entra\^{\i}ne que $\zeta_{d}^{k \underline{d}}=1$, donc $d$ divise $ \underline{d}$. 
On sait que, pour toute racine positive $\beta\in \Sigma$, on peut trouver une suite de racines positives $(\beta_{i})_{i=1,\ldots,m}$ de sorte que $\beta_{1}\in \hat{\Delta}$, $\beta_{i+1}-\beta_{i}\in\Delta$ pour $i=1,\ldots ,m-1$ et $\beta_{m}=\beta$. On applique cela \`a $\beta=-\alpha_{0}  =\sum_{\alpha\in \Delta}d(\alpha)\alpha$ et on \'ecrit 
$$
 \beta_{i}= \sum_{\alpha\in \Delta}d_{i}(\alpha)\alpha.\leqno{(3)}
$$
L'application $i\mapsto d_{i}=\sum_{\alpha\in \ES{O}\cap \Delta}d_{i}(\alpha)$ est croissante et  on a $d_{i+1}\leq d_{i}+1$. Pour $i=m$, on a $d_{m}=\underline{d}$ si $\alpha_{0}\not\in \ES{O}$, et $d_{m}=\underline{d}-1$ si $\alpha_{0}\in \ES{O}$.  Supposons $d<\underline{d}$. Alors il existe $i$ tel que $d_{i}=d$. Fixons un tel  $i$. On a alors $\beta_{i}(s)=\zeta_{d}^{kd}=1$. Donc $\beta_{i}\in \Sigma^{G'}$ et $\beta_{i}$ est combinaison lin\'eaire des \'el\'ements de $\mathfrak{X}_{0}=\Delta_{\rm a}\smallsetminus \ES{O}$. \'Ecrivons $\beta_{i}=\sum_{\alpha\in \Delta_{\rm a}\smallsetminus \ES{O}}m(\alpha)\alpha$. Si $\alpha_{0}\in \ES{O}$, seuls interviennent ici des \'el\'ements de $\Delta$ et cette \'egalit\'e co\"{\i}ncide avec (3). En ce cas $d_{i}(\alpha)=0$ pour $\alpha\in \ES{O}\cap \Delta$, donc $d=d_{i}=0$ ce qui est impossible. Si $\alpha_{0}\not\in \ES{O}$, on a
\begin{eqnarray*}
 \beta_{i} &= &m(\alpha_{0})\alpha_{0}+\sum_{\alpha\in \Delta\smallsetminus \ES{O}}m(\alpha)\alpha\\
& =& -\sum_{\alpha\in \ES{O}}m(\alpha_{0})d(\alpha)\alpha+\sum_{\alpha\in \Delta\smallsetminus \ES{O}}(m(\alpha)-m(\alpha_{0})d(\alpha))\alpha.
\end{eqnarray*}
 En comparant avec (3), on obtient
 $$
 d=-m(\alpha_{0})\sum_{\alpha\in \ES{O}}d(\alpha)=-m(\alpha_{0})\underline{d}.
 $$ On a $d\geq1$ par d\'efinition de $d$. L'\'egalit\'e pr\'ec\'edente contredit l'hypoth\`ese $ d<\underline{d}$. Cette hypoth\`ese est donc contradictoire, d'o\`u $d=\underline{d}$. 
  Le m\^eme raisonnement prouve que, pour tout $e\in \{1,\ldots ,d-1\}$, il existe une racine $\beta_{i}$ telle que $\beta_{i}(s)=\zeta_{d}^{ke}$.  En choisissant pour $e$ l'entier tel que 
  $ke\equiv 1\;({\rm mod}\,d{\mathbb Z})$, on obtient que l'ensemble $\mathfrak{Y}_{1}$ n'est pas vide. Comme on l'a vu ci-dessus, cela entra\^{\i}ne $\mathfrak{X}_{1}\not=\emptyset$, donc $k=1$.   Cela d\'emontre (2) et la proposition.
\end{proof}


\end{document}